\newtheorem{theorem}{Theorem}[section]
\newtheorem{lemma}[theorem]{Lemma}
\newtheorem{proposition}[theorem]{Proposition}
\newtheorem{corollary}[theorem]{Corollary}
\theoremstyle{definition}
\newtheorem{definition}[theorem]{Definition}
\theoremstyle{remark}
\theoremstyle{plain}
\newtheorem*{theorem*}{Theorem}
\newtheorem*{lemma*}{Lemma}
\newtheorem*{proposition*}{Proposition}
\theoremstyle{definition}
\newtheorem*{definition*}{Definition}
\theoremstyle{remark}
\renewcommand{\:}{\colon}
\newcommand{\subsetof}{\subseteq}
\newcommand{\To}{\rightarrow}
\newcommand{\tensor}{\otimes}
\newcommand{\iso}{\cong}
\newcommand{\Hom}{\mathrm{Hom}}
\newcommand{\suchthat}{\,|\,}
\newcommand{\CC}{\mathbb C}
\newcommand{\RR}{\mathbb R}
\newcommand{\ZZ}{\mathbb Z}
\newcommand{\eval}{\mathrm{eval}}
\newcommand{\RP}{\mathbb{R}\mathrm{P}}
\newcommand{\CP}{\mathbb{C}\mathrm{P}}
\newcommand{\cst}{\mathrm{cnst}}
\begin{document}

\title{Quantum extensions of ordinary maps}
\author{\footnotesize Andre Kornell}
\address{ \footnotesize Department of Mathematics\\ University of California, Davis}
\email{kornell@math.ucdavis.edu}

\begin{abstract}
We define a loop to be quantum nullhomotopic if and only if it admits a nonempty quantum set of extensions to the unit disk. We show that the canonical loop in the unit circle is not quantum nullhomotopic, but that every loop in the real projective plane is quantum nullhomotopic. Furthermore, we apply Kuiper's theorem to show that the canonical loop admits a continuous family of extensions to the unit disk that is indexed by an infinite quantum space. We obtain these results using a purely topological condition that we show to be equivalent to the existence of a quantum family of extensions of a given map.
\end{abstract}

\maketitle

\noindent\textbf{Quantum families of extensions.} Noncommutative mathematics draws an analogy between unital C*-algebras and compact Hausdorff spaces. Commutative unital C*-algebras correspond bijectively to compact Hausdorff spaces, up to isomorphism of the former and up to homeomorphism of the latter, so noncommutative unital C*-algebras are viewed as ``quantum compact Hausdorff spaces''. Specifically, each compact Hausdorff space $X$ corresponds to the commutative C*-algebra $C(X)$ of continuous complex-valued functions on $X$, and conversely, each noncommutative unital C*-algebra is imagined to have the same form, but for some fictitious space $X$, whose local topology is quantum in the tautological sense that some real-valued functions on this space fail to commute, in the manner of incompatible observables.

This correspondence between compact Hausdorff spaces and commutative unital C*-algebras extends to a correspondence between continuous maps and unital $*$-homomorphisms. Each map $f$ from a compact Hausdorff space $X$ to a compact Hausdorff space $Y$ induces a unital $*$-homomorphism from $C(Y)$ to $C(X)$, by precompostion. We thus have a contravariant equivalence of categories. Each unital $*$-homomorphism between noncommutative unital C*-algebras is thus viewed as a continuous function between the corresponding quantum spaces, but in the opposite direction. This ``noncommutative metaphor'' can be extended further and further \cite{GarciaBondiaVarillyFigueroa}. For the purposes of the this paper, we only recall that $C(X \sqcup Y) \iso C(X) \oplus C(Y)$ and $C(X \times Y) \iso C(X) \otimes C(Y)$, for compact Hausdorff spaces $X$ and $Y$, so we take the direct sum and the minimal tensor product of C*-algebras to be the appropriate generalizations of disjoint union and Cartesian product of compact Hausdorff spaces. The minimal tensor product is preferred over other notions because it distributes over the direct sum.

Let $S$ and $T$ be compact Hausdorff spaces, and $f$ an ordinary map from $S$ to $T$. If $S$ is a subspace of a larger compact Hausdorff space $\tilde S$, then an extension of $f$ to $\tilde S$ is of course a map $\tilde f\: \Tilde S \To T$ making the following diagram commute:
$$
\begin{tikzcd}
\tilde S \arrow[dotted]{rrd}{\tilde f}
&
&
\\
S \arrow[hook]{u}{j} \arrow{rr}[swap]{f}
&
&
T
\end{tikzcd}
$$
We write $j\: S \hookrightarrow \tilde S$ for the inclusion map. A \textit{family} of extensions indexed by a compact Hausdorff space $Y$ is instead a continuous function $\tilde f$ of two variables making making the following diagram commute:
$$
\begin{tikzcd}
\tilde S \arrow[dotted]{rrd}{\tilde f} \times Y
&
&
\\
S \times Y \arrow[hook]{u}{j \times \mathrm{id}} \arrow{r}[swap]{f \times \mathrm{id}}
&
T \times Y \arrow{r}[swap]{\mathrm{proj}_1}
&
T
\end{tikzcd}
$$

The existence of a nonempty family of extensions indexed by an ordinary compact Hausdorff space is of course equivalent to the existence of a single extension. We will show that there is no such equivalence when we consider nonempty families of extensions indexed by a \textit{quantum} compact Hausdorff space. To make this claim precise, we first apply the functor $C$ to the diagram above, to obtain a diagram in the category of unital C*-algebras and unital $*$-homomorphisms:
$$
\begin{tikzcd}
C(\tilde S) \otimes C(Y) \arrow{d}[swap]{j^\star \tensor \mathrm{id}}
&
&
\\
C(S) \otimes C(Y)
&
C(T) \otimes C(Y) \arrow{l}{f^\star \tensor \mathrm{id}}
&
C(T) \arrow[dotted]{llu} \arrow{l}{\mathrm{proj}_1^\star}
\end{tikzcd}
$$
Allowing the index space $Y$ to be quantum, we replace $C(Y)$, with an arbitrary unital C*-algebra $B$. We say that the quantum space is nonempty just in case the C*-algebra $B$ is nonzero. Thus, we ask whether there exists a nonzero unital C*-algebra $B$, and a unital normal $*$-homomorphism $\phi\: C(T) \To C(\tilde S) \tensor B$ making the following diagram commute:
$$
\begin{tikzcd}
C(\tilde S) \otimes B \arrow{d}[swap]{j^\star \tensor \mathrm{id}}
&
&
\\
C(S) \otimes B
&
C(T) \otimes B \arrow{l}{f^\star \tensor \mathrm{id}}
&
C(T) \arrow[dotted]{llu}[swap]{\phi} \arrow{l}{a \tensor 1 \mapsfrom a}
\end{tikzcd}
$$

The core result of the paper provides a purely topological condition equivalent to the existence of a family of extensions indexed by a quantum compact Hausdorff space.

\setcounter{section}{1}

\setcounter{theorem}{5}

\begin{theorem}
Let $B$ be a unital C*-algebra. Let $\imath \: T \To \Hom(C(T), B)$ be the map taking each point of $T$ to evaluation at that point. There exists a unital $*$-homomorphism $C(T) \To C(\tilde S) \tensor B$ making the above diagram commute if and only if the ordinary map $\imath \circ f$ extends to $\tilde S$:

\
\vspace{-.3in}
$$
\begin{tikzcd}
\tilde S \arrow[dotted]{rrd}{\tilde f}
&
&
\\
S \arrow[hook]{u}{j}\arrow{r}[swap]{f}
&
T  \arrow{r}[swap]{\imath}
&
\mathrm{Hom}(C(T), B )
\end{tikzcd}
$$
\end{theorem}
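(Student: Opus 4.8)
The plan is to recognize the stated equivalence as an instance of the exponential law (Currying), once the tensor products appearing in the diagram are identified with algebras of vector-valued functions. The key identification is the standard isometric isomorphism $C(X) \tensor B \iso C(X, B)$ of the minimal tensor product with the C*-algebra of continuous $B$-valued functions on $X$, for any compact Hausdorff space $X$; it applies because $C(X)$ is commutative, hence nuclear, so the minimal tensor product is unambiguous. Applying it to $X = \tilde S$, $X = S$, and $X = T$, I would rewrite the entire diagram in terms of $C(\tilde S, B)$, $C(S, B)$, and $C(T, B)$. Under this rewriting $j^\star \tensor \mathrm{id}$ becomes restriction of $B$-valued functions along $j$, the arrow $f^\star \tensor \mathrm{id}$ becomes composition with $f$, and $a \tensor 1 \mapsfrom a$ becomes the inclusion $C(T) \hookrightarrow C(T, B)$, $g \mapsto g \cdot 1_B$.

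Next I would establish a bijection between unital $*$-homomorphisms $\phi \: C(T) \To C(\tilde S, B)$ and continuous maps $\Phi \: \tilde S \To \Hom(C(T), B)$, where $\Hom(C(T), B)$ carries the topology of pointwise norm convergence. Given $\phi$, set $\Phi(x) = \mathrm{ev}_x \circ \phi$, where $\mathrm{ev}_x \: C(\tilde S, B) \To B$ is evaluation at $x$; since each $\mathrm{ev}_x$ is a unital $*$-homomorphism, so is each $\Phi(x)$, and $\Phi$ is point-norm continuous exactly because every $\phi(a)$ is a norm-continuous $B$-valued function. Conversely, given $\Phi$, set $\phi(a)(x) = \Phi(x)(a)$; this lands in $C(\tilde S, B)$ because $x \mapsto \Phi(x)(a)$ is continuous and bounded by $\|a\|$, and $\phi$ is a unital $*$-homomorphism because the operations on $C(\tilde S, B)$ are computed pointwise. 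These two assignments are mutually inverse.

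It then remains to translate the commutativity of the diagram across this correspondence. Commutativity asserts that for every $a \in C(T)$ the functions $(j^\star \tensor \mathrm{id})(\phi(a))$ and $(f^\star \tensor \mathrm{id})(a \tensor 1)$ agree in $C(S, B)$, that is, $\phi(a)(s) = a(f(s)) \cdot 1_B$ for all $s \in S$. Since $a(f(s)) \cdot 1_B = \imath(f(s))(a)$ and $\phi(a)(s) = \Phi(s)(a)$, this reads $\Phi(s)(a) = (\imath \circ f)(s)(a)$; quantifying over $a$ gives precisely $\Phi|_S = \imath \circ f$, i.e.\ $\Phi$ is a continuous extension of $\imath \circ f$ along $j$. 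Combining the three steps yields the asserted equivalence, with $\tilde f = \Phi$.

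The argument is largely bookkeeping, and the one point that genuinely requires care is the choice of topology on $\Hom(C(T), B)$: I must verify that point-norm continuity of $\Phi$ is equivalent, on the nose, to each $\phi(a)$ being a continuous $B$-valued function on $\tilde S$. This equivalence is the hinge on which the exponential law turns, ensuring that both directions of the bijection land in the correct categories; a weaker topology, such as the point-weak topology, would only produce weakly continuous $\phi(a)$ and would break the identification with $C(\tilde S, B)$. The remaining verifications---the isomorphism $C(X) \tensor B \iso C(X, B)$, that each $\Phi(x)$ is a unital $*$-homomorphism, and the pointwise checks that $\phi$ preserves the algebraic structure---are routine.
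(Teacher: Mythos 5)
Your proof is correct, and its overall strategy coincides with the paper's: identify $C(X) \tensor B$ with $C(X,B)$, curry to pass between unital $*$-homomorphisms $C(T) \To C(\tilde S, B)$ and continuous maps $\tilde S \To \Hom(C(T),B)$, and observe that commutativity of the algebra diagram translates on the nose into the condition $\Phi|_S = \imath \circ f$. Where you differ is in how the currying step is justified. The paper sets up the exponential law abstractly in Steenrod's convenient category: it equips $\Hom(C(T),B)$ with the kaonized compact-open topology, proves separately (its Lemma 1.3) that on norm-bounded sets of operators the compact-open topology agrees with the point-norm topology, and then uses compactness of $S$ to drop the kaonization; the theorem itself is then read off from a naturality square together with the identification of $\pi$ with $\imath$. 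You instead verify the bijection by hand, using the fact that the point-norm topology is the initial topology for the evaluations $\mathrm{ev}_a$, so that continuity of $\Phi$ is exactly continuity of each $x \mapsto \Phi(x)(a)$, which is exactly the statement that each $\phi(a)$ lies in $C(\tilde S, B)$. For this one theorem your route is more elementary and self-contained; the paper's heavier setup buys naturality in $S$, $A$, and $B$ and reusable topological lemmas (compactness and metrizability of $\Hom(C(T),M_n(\CC))$) that it needs later. Your closing remark correctly isolates the hinge of the argument --- that the point-norm topology is the unique choice making the exponential law hold on the nose --- which is precisely the content the paper spreads across its Lemmas 1.2--1.5.
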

\noindent Thus, the functor $\mathrm{Hom}(C(\,- \,), B)$ is analogous to the quantum monad of Abramsky, Barbosa, de Silva, and Zapata \cite{AbramskyBarbosaDeSilvaZapata}.

\smallskip

\noindent\textbf{Quantum nullhomotopic loops.} Recall that a loop $f\: S^1 \To T$ is nullhomotopic if and only if $f$ extends to a map on the unit disk $D^2$. As an application of the above theorem, we investigate the corresponding quantum notion. Setting $S = S^1$ and $\tilde S = D^2$, we ask whether a map $f\: S^1 \To T$ admits a family of extensions indexed by a nonempty quantum compact Hausdorff space.

This notion of quantum nullhomotopy degenerates if we allow arbitrary quantum compact Hausdorff spaces as index spaces. As a consequence of Kuiper's theorem \cite{Kuiper} we show that the canonical loop $S^1 \To S^1$ admits a quantum family of extensions to the unit disk:

\setcounter{section}{2}

\setcounter{theorem}{5}

\begin{theorem}\label{canonical}
Let $H$ be an infinite-dimensional Hilbert space, and let $L(H)$ be the unital C*-algebra of bounded operators on $H$. There is a unital $*$-homomorphism $\phi$ such that the following diagram commutes:
$$
\begin{tikzcd}
C(D^2) \tensor L(H)
\arrow{d}[swap]{j^\star \tensor \mathrm{id}}
&
&
\\
C(S^1) \tensor L(H)
&
&
C(S^1)
\arrow{ll}{ a \tensor 1 \mapsfrom a}
\arrow{ull}[swap]{\phi}
\end{tikzcd}
$$
\end{theorem}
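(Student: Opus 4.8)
The plan is to deduce this from the core result (Theorem 1.6) by specializing to $S = T = S^1$, $\tilde S = D^2$, $f = \mathrm{id}_{S^1}$, and $B = L(H)$, and then to recognize the resulting ordinary extension problem as one governed by Kuiper's theorem. By Theorem 1.6, a unital $*$-homomorphism $\phi$ making the displayed diagram commute exists if and only if the map $\imath \circ \mathrm{id}_{S^1} = \imath \colon S^1 \To \Hom(C(S^1), L(H))$ extends continuously over the disk $D^2$. So the entire burden is to produce such an extension.

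First I would identify the target $\Hom(C(S^1), L(H))$ concretely. Since $C(S^1)$ is the universal unital C*-algebra generated by a single unitary --- namely the coordinate function $z$ --- every unital $*$-homomorphism $C(S^1) \To L(H)$ is determined by the unitary to which it sends $z$, giving a bijection $\Hom(C(S^1), L(H)) \iso U(H)$ onto the unitary group of $H$. Under this identification, evaluation at $\lambda \in S^1$ sends $z$ to the scalar $\lambda$, hence corresponds to the unitary $\lambda I$; thus $\imath$ is exactly the loop $\lambda \mapsto \lambda I$. I would then check that the topology on $\Hom(C(S^1), L(H))$ relevant to Theorem 1.6 --- pointwise norm convergence, so that $x \mapsto \tilde f(x)(a)$ lands in $C(D^2, L(H)) \iso C(D^2) \tensor L(H)$ for each $a$ --- restricts on $U(H)$ to the operator norm topology, for which $\imath$ is an isometric embedding.

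The decisive step is then to invoke Kuiper's theorem: the unitary group $U(H)$ of an infinite-dimensional Hilbert space is contractible in the norm topology, so $\pi_1(U(H)) = 0$ and every loop in $U(H)$ is nullhomotopic. In particular the loop $\lambda \mapsto \lambda I$ bounds a disk, that is, it extends to a continuous map $\tilde f \colon D^2 \To U(H)$. Feeding this extension back through Theorem 1.6 yields the desired $\phi$, and in fact the formula $\phi(a)(x) = \tilde f(x)(a)$ makes the reconstruction explicit.

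I expect the main obstacle to be the bookkeeping around topologies rather than any deep argument: one must confirm that $\Hom(C(S^1), L(H)) \iso U(H)$ is a homeomorphism for the point-norm topology and the norm topology, so that Kuiper's norm-topology statement is precisely what is needed. This is also where the infinite-dimensionality of $H$ is essential --- in finite dimensions the loop $\lambda \mapsto \lambda I$ has nonzero winding number in $U(n)$ and admits no such extension, reflecting the fact that the canonical loop is genuinely not nullhomotopic and that the phenomenon here is created by passing to an infinite quantum index space.
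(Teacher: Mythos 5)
Your proposal matches the paper's own proof essentially step for step: the paper likewise specializes Theorem \ref{k7} to $S = S^1$, $\tilde S = D^2$, $T = S^1$, $f = \mathrm{id}_{S^1}$, $B = L(H)$, identifies $\Hom(C(S^1), L(H))$ with $U(H)$ in the norm topology via evaluation at the coordinate unitary $z$ (this is exactly the content of Lemma \ref{l5}, the ``topological bookkeeping'' you flag), and then applies Kuiper's theorem to contract the loop $\lambda \mapsto \lambda I$ over the disk. Your argument is correct and takes the same route, including the correct observation that infinite-dimensionality is what makes the loop bound a disk in $U(H)$.
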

\noindent It follows that every loop in every compact Hausdorff space admits a quantum family of extensions to the unit disk indexed by the quantum compact Hausdorff space corresponding to $L(H)$. Therefore, we make the following definition:

\setcounter{theorem}{0}

\begin{definition}
A loop $f\: S^1 \To T$ is \underline{quantum nullhomotopic} just in case there is a nonzero finite-dimensional Hilbert space $H$ and unital $*$-homomorphism $\phi$ such that the following diagram commutes.

$$
\begin{tikzcd}
C(D^2) \otimes L(H) \arrow{d}[swap]{j^\star \tensor \mathrm{id}}
&
&
\\
C(S^1) \otimes L(H)
&
C(T) \otimes L(H) \arrow{l}{f^\star \tensor \mathrm{id}}
&
C(T) \arrow[dotted]{llu}[swap]{\phi} \arrow{l}{a \tensor 1 \mapsfrom a}
\end{tikzcd}
$$
\end{definition}
\noindent Equivalently, we may replace the arbitrary nonzero matrix algebra $L(H)$ with an arbitrary nonzero finite-dimensional C*-alegebra $B$ in this definition.

This notion of quantum nullhomotopy does not degenerate:
\setcounter{theorem}{3}
\begin{proposition}
The identity function $S^1 \To S^1$ is not quantum nullhomotopic as a loop in $S^1$.
\end{proposition}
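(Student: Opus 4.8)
The plan is to translate the problem into a statement about unitary-valued functions on the disk and then extract a winding-number obstruction via the determinant. First I would recall that $C(S^1)$ is the universal unital C*-algebra generated by a single unitary, namely the coordinate function $z$. Hence a unital $*$-homomorphism $\phi \colon C(S^1) \To C(D^2) \tensor L(H)$ is determined by, and corresponds bijectively to, the unitary element $U = \phi(z)$. Writing $n = \dim H$, finite-dimensionality lets me identify $C(D^2) \tensor L(H) \iso C(D^2, M_n(\CC))$, under which a unitary element is exactly a continuous map $U \colon D^2 \To \mathrm{U}(n)$. Since $f = \mathrm{id}_{S^1}$, the commutativity of the diagram reduces to the single condition $(j^\star \tensor \mathrm{id})(U) = z \tensor 1$; that is, the continuous map $U$ must satisfy the boundary condition $U(w) = w\, I_n$ for every $w \in \partial D^2 = S^1$.

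Next I would pass to the determinant. The map $\det \colon \mathrm{U}(n) \To \mathrm{U}(1) = S^1$ is continuous, so $d := \det \circ\, U \colon D^2 \To S^1$ is a continuous map defined on all of $D^2$. Its restriction to the boundary circle is $w \mapsto \det(w\, I_n) = w^n$, a map of degree $n$. But $d$ extends over the contractible disk $D^2$, so $d|_{S^1}$ is nullhomotopic and therefore has degree $0$ (explicitly, $(w,t) \mapsto d(tw)$ contracts it to the constant $d(0)$). Since $H \neq 0$ forces $n \geq 1$, we have $n \neq 0$, a contradiction. Hence no such $\phi$ exists, and the identity loop is not quantum nullhomotopic.

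I expect the only real content to be in the determinant step: it is the device that converts the hard-to-analyze homotopy class of the loop $w \mapsto w\, I_n$ in $\mathrm{U}(n)$ into a visible winding number in $\mathrm{U}(1)$, exploiting that $\det_\ast \colon \pi_1(\mathrm{U}(n)) \To \pi_1(S^1)$ is an isomorphism. The one genuine hypothesis being used is that $H$ is \emph{finite}-dimensional, so that the determinant is available and $L(H)$ is a matrix algebra; this is precisely where the argument must break down for infinite-dimensional $H$, in agreement with Theorem \ref{canonical}, where Kuiper's theorem makes the unitary group contractible and the obstruction vanishes.
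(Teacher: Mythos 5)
Your proof is correct and uses the same essential device as the paper: evaluating $\phi$ at the canonical unitary $z$ and composing with $\det$ to turn the boundary condition into the degree-$n$ loop $w \mapsto w^n$, which cannot extend over the disk. The only difference is bookkeeping --- you invoke the universal property of $C(S^1)$ directly to identify $\phi$ with a unitary-valued map $U\: D^2 \To \mathrm{U}(n)$, whereas the paper routes through its general extension correspondence (Corollary 2.2) to obtain a nullhomotopic loop in $\Hom(C(S^1), M_n(\CC))$ before applying $\eval_z$ and $\det$; both paths lead to the same winding-number contradiction.
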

\noindent Furthermore, it is distinct from the ordinary notion of nullhomotopy of loops:
\setcounter{section}{3}
\setcounter{theorem}{2}
\begin{corollary}
Every loop in $\RP^2$ is quantum nullhomotopic.
\end{corollary}
\setcounter{theorem}{4}
\begin{corollary}
There is a loop in $S^1 \vee S^1$ that is quantum nullhomotopic, but not nullhomotopic in the ordinary sense.
\end{corollary}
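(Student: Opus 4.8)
The plan is to take the example to be the commutator loop. Writing $\pi_1(S^1 \vee S^1) = F_2 = \langle a, b\rangle$, where $a$ and $b$ are the two circle loops based at the wedge point, I would set $f = [a,b] = a * b * \bar a * \bar b$. Since $[a,b]$ is nontrivial in the free group $F_2$, the loop $f$ is not nullhomotopic in the ordinary sense, so everything reduces to proving that $f$ is quantum nullhomotopic. For this I would invoke the core theorem (Theorem 1.6) with $B = L(\mathbb{C}^2) = M_2(\mathbb{C})$: the loop $f$ admits a quantum family of extensions indexed by $M_2$ exactly when $\imath \circ f$ extends over $D^2$, that is, when $\imath \circ f$ is nullhomotopic in $R_2 := \Hom(C(S^1 \vee S^1), M_2)$.

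First I would identify $R_2$ concretely. As $C(S^1 \vee S^1)$ is the universal commutative C*-algebra on two unitaries $u,v$ with $(1-u)(1-v)=0$ (the coordinate functions on the two circles, with the wedge point at $u=v=1$), a unital $*$-homomorphism to $M_2$ is the same as a pair $(U,V)\in U(2)^2$ of commuting unitaries with $(I-U)(I-V)=0$. Under $\imath$ a point of the first circle at angle $\theta$ maps to the scalar pair $(e^{i\theta}I, I)$ and a point of the second to $(I, e^{i\theta}I)$, so $\imath\circ a$ and $\imath\circ b$ are the scalar loops $\alpha(\sigma)=(e^{2\pi i\sigma}I, I)$ and $\beta(\tau)=(I, e^{2\pi i\tau}I)$ in $R_2$, based at $(I,I)$.

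The heart of the argument is to show $[\imath\circ a,\, \imath\circ b] = 1$ in $\pi_1(R_2,(I,I))$, which I would do in two moves. The explicit map $T^2 \to R_2$ sending $(\sigma,\tau)$ to $(\mathrm{diag}(e^{2\pi i\sigma},1),\, \mathrm{diag}(1,e^{2\pi i\tau}))$ is well defined, since these matrices commute and satisfy $(I-U)(I-V)=0$ because their nonidentity eigenvalues lie on complementary eigenlines; restricting to the coordinate circles gives the split loops $\alpha'(\sigma)=(\mathrm{diag}(e^{2\pi i\sigma},1), I)$ and $\beta'(\tau)=(I,\mathrm{diag}(1,e^{2\pi i\tau}))$, so $x:=[\alpha']$ and $y:=[\beta']$ commute in $\pi_1(R_2)$. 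Next, working in the slice $\{V=I\}\cong U(2)$ (every $(U,I)$ lies in $R_2$) and using $\pi_1(U(2))\cong\mathbb Z$ detected by the winding of $\det$, I note that $\det\circ\alpha$ winds twice while $\det\circ\alpha'$ winds once, whence $\imath\circ a = x^2$ in $\pi_1(R_2)$; symmetrically $\imath\circ b = y^2$. As $x$ and $y$ commute, $[\imath\circ a,\,\imath\circ b]=[x^2,y^2]=1$, so $\imath\circ f$ is nullhomotopic in $R_2$ and Theorem 1.6 produces the family of extensions indexed by $M_2$.

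The main obstacle is this middle step: although the scalar loops $\imath\circ a$ and $\imath\circ b$ cannot be pried apart inside the commutative space $S^1\vee S^1$ itself, inside the representation space $R_2$ each becomes the \emph{square} of a single-eigenvalue loop, and those square roots genuinely commute because their nonidentity eigenvalues can be carried on orthogonal eigenlines — precisely the extra room that the relation $(I-U)(I-V)=0$ still allows for $2\times 2$ matrices. The concrete points to verify are that the displayed torus really lands in $R_2$ and that the winding-number bookkeeping in the $U(2)$ slice is correct, both routine once the construction is set up; notably, this argument is self-contained and does not rely on the $\RP^2$ corollary.
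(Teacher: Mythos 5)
Your proposal is correct, and it reaches the same example (the commutator of the two generators of $\pi_1(S^1\vee S^1)$) by a genuinely different route. The paper first proves a general structural result: $\Hom(C(T),M_2(\CC))$ is a quotient of $S^2\times T\times T$ (Lemma 3.1), from which it deduces that $\imath_*[\varphi]=q_*[\cst,\cst,\varphi]\cdot q_*[\cst,\varphi,\cst]$ is a product of two commuting (indeed equal, by Theorem 3.2) classes, so that $\imath_*(\pi_1(T,t_0))$ is abelian for \emph{every} compact Hausdorff $T$ (Proposition 3.4); the corollary then falls out for any commutator. You instead specialize immediately to $T=S^1\vee S^1$, present $C(S^1\vee S^1)$ as the universal commutative C*-algebra on unitaries $u,v$ with $(1-u)(1-v)=0$, identify $\Hom(C(T),M_2(\CC))$ with commuting unitary pairs satisfying that relation, and show concretely that each scalar generator loop is the square of a ``half-winding'' diagonal loop, with the two half-windings commuting because they extend over an explicit torus supported on orthogonal eigenlines; the winding-number computation via $\det_*\:\pi_1(U(2))\To\ZZ$ in the slice $\{V=I\}$ correctly pins down $[\imath\circ a]=x^2$. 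This is the same underlying phenomenon the paper exploits (your $x$ is, up to conjugation, the paper's $q_*[\cst_{(1,0)},a,\cst_{t_0}]$, and the paper's factorization also exhibits $\imath_*[\varphi]$ as a square), but your version is elementary and self-contained, at the cost of generality: it does not yield the abelianness of $\imath_*(\pi_1(T))$ for arbitrary $T$, nor the $\RP^2$ corollary. The only points to write out carefully are the ones you already flag, plus the continuity of the identification of $\Hom(C(S^1\vee S^1),M_2(\CC))$ with unitary pairs in the point-norm topology, which follows by the same density-plus-contractivity argument as the paper's Lemma 2.5.
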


\smallskip
\newpage

\noindent \textbf{Quantum sets.}
If $X$ is a locally compact Hausdorff space, then $$C_0(X) = \{f\in (X, \CC) \suchthat \lim_{x \To \infty} f(x) = 0\}$$ is a C*-algebra, which is unital iff $X$ is compact. Thus, C*-algebras are commonly viewed to be the quantum generalization of locally compact Hausdorff spaces. In particular, quantum sets should correspond to a class of C*-algebras.

The noncommutative dictionary does not yet include a widely accepted quantum generalization of sets. If the C*-algebra of compact operators is taken to correspond to a quantum set, then in fact there does exist a nonempty quantum set of extensions of the canonical loop to the unit disk. The unital $*$-homomorphism $\phi$ of theorem \ref{canonical} can be viewed as a morphism, in the sense of Woronowicz \cite{GarciaBondiaVarillyFigueroa}, from $C(S^1)$ to $C(D^2) \tensor L_0(H)$, and thus, as a quantum family of extensions that is indexed by the quantum set corresponding the compact operator algebra $L_0(H)$.

The notion of quantum set preferred by the author excludes those quantum locally compact Hausdorff spaces that correspond to infinite-dimensional compact operator C*-algebras. Instead, quantum sets are taken to be those quantum locally compact Hausdorff spaces that correspond to $c_0$-direct sums of finite matrix C*-algebras. This notion of discreteness first appeared in the context of quantum Gelfand duality \cite{PodlesWoronowicz90} \cite{EffrosRuan94} \cite{VanDaele96} \cite{DeCommerKasprzakSkalskiSoltan16}. It was later promoted by the author \cite{Kornell18}.

Decomposing the C*-algebra of a quantum set into matrix algebras, we find that a loop is quantum nullhomotopic if and only if it admits a quantum set of extensions to the unit disk, that is, a quantum family of extensions indexed by a discrete quantum space.

\smallskip

\noindent \textbf{Quantum pseudotelepathy.} Quantum families of functions can be interpreted as quantum strategies for games in which two players, traditionally named Alice and Bob, cooperate against a Referee without communicating with one another. In some instances, Alice and Bob have a winning strategy that utilizes quantum entanglement, despite having no winning strategy classically. The availability of a quantum strategy is equivalent to the existence of a quantum set of functions of one or another kind \cite{CameronMontanaroNewmanSeveriniWinter}*{proposition 1} \cite{MancinskaRoberson} \cite{AbramskyBarbosaDeSilvaZapata} \cite{MustoReutterVerdon} \cite{Kornell18}*{1.2}.

In this context, the domain and codomain spaces are taken to be finite. However, it is also possible to interpret the quantum extension problem considered in this paper as an idealized game of the same kind. In the example of theorem \ref{canonical}, the Referee sends Alice and Bob elements of $D^2$, and they respond with elements of $S^1$. Alice and Bob lose if their responses disagree, provided that the Referee sent them both the same element of $D^2$, or if either Alice or Bob fails to mirror the Referee's move, provided that the Referee sent them an element of $S^1$.

I speculate that this idealized game is not so far removed from the physical world as one might imagine. The players could perhaps exchange continuous values as the momenta of particles, and the Referee could be certain of sending the same value to both Alice and Bob by using a mechanism such as pair creation. The strategies implemented by Alice and Bob would necessarily be continuous, as a physical limitation.

Preliminary computations suggest that it is possible to formulate some of the results of the present paper in terms of discrete games. However, these games retain some infinitary element, e. g., the number of possible moves is infinite, or the length of play is unbounded. Furthermore, the quantum advantage becomes probabilistic, rather than deterministic. Thus, the significance of such reformulations is uncertain.

\noindent \textbf{Acknowledgements.} I thank Neil Ross and Peter Selinger for organizing the Quantum Physics and Logic conference in Halifax, an enriching experience that led me to consider this quantum extension problem. I thank Greg Kuperberg for advising me to emphasize the topological aspect of the problem, in favor of the quantum communication aspect. I thank Rui Soares Barbosa and Sam Staton for giving me the opportunity to present this research at the Oxford Advanced Seminar on Informatic Structures.

\vspace{-.1in}

\setcounter{section}{0}

\section{quantum extensions}

Let $S$ and $T$ be compact Hausdorff spaces, and let $H$ be a nonzero finite-dimensional Hilbert space. In this section, we demonstrate a one-to-one correspondence between the unital $*$-homomorphisms from $C(T)$ to $C(S) \tensor L(H)$, and the continuous functions from $S$ to $\mathrm{Hom}(C(T), L(H))$. There are several reasonable ways to gloss the term $C(S) \tensor L(H)$ that turn out to be equivalent, and likewise for $\mathrm{Hom}(C(T), L(H))$. The C*-algebras $C(S)$ and $L(H)$ are both nuclear, so there is a unique cross-norm on their algebraic tensor product. In fact, their algebraic tensor product is already isomorphic to the C*-algebra of $\mathrm{dim}(H) \times \mathrm{dim}(H)$ matrices over $C(S)$, so the term $C(S) \tensor L(H)$ can refer equivalently to the maximum tensor product, the minimum tensor product, or the algebraic tensor product of the two C*-algebras.

The space $\mathrm{Hom}(C(T), L(H))$ consists of unital $*$-homomorphisms from $C(T)$ to $L(H)$. This set is typically equipped with the point-norm topology, which is characterized by the condition that a net $(\phi_\alpha)$ converges to $\phi_\infty$ if and only if for each function $b$ in $C(T)$, the net $(\phi_\alpha(b))$ converges to $\phi_\infty(b)$ in the operator norm topology. We instead equip $\mathrm{Hom}(C(T), L(H))$ with the kaonization of the compact-open topology, which we will later show to be equivalent to the point-norm topology. We do so to reason in Steenrod's convenient category of compactly generated spaces \cite{Steenrod}. We summarize a few of its basic aspects.

Steenrod's convenient category is a subcategory of the category of topological spaces and continuous functions that includes all locally compact spaces and all metrizable spaces. It is convenient primarily in the sense that it is Cartesian closed: for all objects $X$, $Y$, and $Z$ of the category, the space of continuous functions $C(Y,Z)$ is itself an object of the category, and there is a natural one-to-one correspondence between the continuous function from $X \times Y$ to $Z$ and the continuous functions from $X$ to $C(Y, Z)$. However, the category is inconvenient in the sense that the topology of the category-theoretic product $X \times Y$ is sometimes different from the usual product topology.

The objects of Steenrod's category are the so-called compactly generated Hausdorff spaces. These spaces are characterized by the criterion that any set whose intersection with every compact set is closed must itself be closed. Kaonization is the functor from Hausdorff spaces to compactly generated Hausdorff spaces that adds closed sets according to this criterion; it is typically denoted by the lower-case letter $k$. The category-theoretic product of two compactly generated Hausdorff spaces $X$ and $Y$ is the kaonization of the usual topological product. The morphisms of Steenrod's category are ordinary continuous functions, and the set of continuous functions $C(Y,Z)$ is canonically equipped with the kaonization of the compact-open topology on this set. If $X$ is locally compact, then the product topology on $X \times Y$ is already compactly generated. If $Y$ is compact, and $Z$ is metrizable, then the compact-open topology on $C(Y, Z)$ is also already compactly generated; it is simply the uniform topology. In particular, in Steenrod's category, $C(S) = C(S, \CC)$ and $C(T) = C(T, \CC)$ are canonically equipped with their usual norm topologies.

With this overview of the compactly generated spaces, we are ready to formally state and prove the claimed one-to-one correspondence.

\begin{definition}\label{k1}
Let $A$ and $B$ be unital C*-algebras. We write $\mathrm{Hom}(A, B)$ for the set of all unital $*$-homomorphisms from $A$ to $B$, equipped with the kaonization of the compact-open topology.
\end{definition}

The space $\mathrm{Hom}(A, B)$ is a subspace of the space $C(A, B)$, itself equipped with the kaonization of its compact open topology. It easy to see that for closed subsets, the kaonization of the subspace topology coincides with the subpace topology of the kaonization.

\begin{lemma}\label{k2}
We have a homeomorphism
$\mathrm{Hom} (A , C(S) \tensor B ) \; \iso \; C( S, \mathrm{Hom}(A , B )),$ natural in the compact Hausdorff space $S$, and the unital C*-algebras $A$ and $B$.
\end{lemma}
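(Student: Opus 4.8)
The plan is to reduce the claimed homeomorphism to the exponential law of Steenrod's convenient category, after rewriting the left-hand side. First I would invoke the standard identification $C(S) \tensor B \iso C(S, B)$, where $C(S,B)$ is the C*-algebra of continuous $B$-valued functions on $S$ with pointwise operations and the supremum norm; since $C(S)$ is nuclear this term is unambiguous, and the supremum-norm topology on $C(S,B)$ is the (already compactly generated) compact-open topology, because $S$ is compact and $B$ is metrizable. With this identification, a point of $\Hom(A, C(S)\tensor B)$ is a unital $*$-homomorphism $\psi \: A \To C(S,B)$, and I would assign to it the map $s \mapsto \eval_s \circ \psi$, where $\eval_s \: C(S,B) \To B$ is evaluation at $s$, itself a unital $*$-homomorphism. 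On underlying sets, the inverse assignment sends $\Phi \: S \To \Hom(A,B)$ to the map $a \mapsto (s \mapsto \Phi(s)(a))$.

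The key to both the bijection and the topology is the coordinate swap. Since $S$ is locally compact, the product $A \times S$ carries its ordinary product topology and lies in the convenient category, so Cartesian closedness gives natural homeomorphisms $C(A, C(S,B)) \iso C(A \times S, B) \iso C(S \times A, B) \iso C(S, C(A,B))$, whose composite $\Xi$ is exactly the coordinate swap $\Xi(\psi)(s)(a) = \psi(a)(s)$, i.e. $\Xi(\psi)(s) = \eval_s \circ \psi$. Because the algebraic operations of $C(S,B)$ are computed pointwise, $\psi$ is a unital $*$-homomorphism if and only if $\eval_s \circ \psi$ is one for every $s$; hence $\Xi$ carries $\Hom(A, C(S,B))$ bijectively onto the set of $\Phi \in C(S, C(A,B))$ with $\Phi(S) \subseteq \Hom(A,B)$, which is precisely $C(S, \Hom(A,B))$ as a set.

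It remains to match the topologies, and this is where the bookkeeping lives. I would first check that $\Hom(A,B)$ is closed in $C(A,B)$: in the coarser topology of pointwise norm-convergence the defining equations of a unital $*$-homomorphism are closed conditions, using joint norm-continuity of multiplication on $B$, and closedness passes to the finer compact-open topology. By the remark following Definition \ref{k1}, kaonization then commutes with restriction to $\Hom(A,B)$, so Definition \ref{k1} indeed equips $\Hom(A, C(S,B))$ with the subspace topology inherited from $C(A, C(S,B))$. On the other side, the subset $\{\Phi : \Phi(S) \subseteq \Hom(A,B)\}$ is closed in $C(S, C(A,B))$, being $\bigcap_s \eval_s^{-1}(\Hom(A,B))$; combining the standard fact that the compact-open topology on $C(S, \Hom(A,B))$ is the subspace topology from $C(S, C(A,B))$ with the same kaonization-of-closed-subspace remark shows that this subset, as a subspace, carries exactly the convenient-category topology of $C(S, \Hom(A,B))$. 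Since $\Xi$ is a homeomorphism of the ambient spaces restricting to our bijection between these two subspaces, it is a homeomorphism between them.

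Finally, naturality in $S$, $A$, and $B$ I would read off from the naturality of the exponential law in all three of its arguments, together with the functoriality of $C(S,-)$, $\Hom(A,-)$, and their precomposition counterparts; no new idea is needed there. I expect the main obstacle to be not the bijection, which is formal, but the topological matching in the third paragraph: one must verify closedness of the two $\Hom$-type subspaces and invoke the compatibility of kaonization with closed subspaces at exactly the right points, so that both $\Hom(A, C(S)\tensor B)$ and $C(S, \Hom(A,B))$ are genuinely the subspaces of the ambient mapping spaces that $\Xi$ exchanges.
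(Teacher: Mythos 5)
Your proposal is correct and follows essentially the same route as the paper's proof: identify $C(S)\tensor B$ with $C(S,B)$, apply the exponential law of the convenient category to get $C(A, C(S,B)) \iso C(S, C(A,B))$, and observe that the coordinate swap restricts to the $\Hom$-subspaces because the algebraic operations of $C(S,B)$ are pointwise. The additional bookkeeping you supply in your third paragraph (closedness of the $\Hom$-type subsets and the compatibility of kaonization with closed subspaces) is exactly the content of the paper's remark following Definition \ref{k1}, which the paper's proof leaves implicit.
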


Implicitly, the morphisms of compact Hausdorff spaces are continuous functions, and the morphisms of unital C*-algebras are unital $*$-homomorphisms.

\begin{proof}
The compact-open topology on $C(S,B )$ is the topology of uniform convergence. It is a metrizable topology, so it is compactly generated; thus, the topology on $C(S,B )$ in Steenrod's convenient category is just the usual one. There is a well known isomorphism between the C*-algebras $C(S) \tensor B $ and $C(S, B )$ \cite{Murphy}*{theorem 6.4.17}, which is natural in $S$ and $B$, so it is sufficient to establish that $\mathrm{Hom} (A , C(S, B )) \iso C( S, \mathrm{Hom}(A , B )).$

We obtain this natural homeomorphism as a restriction of the following composition:
$$ C(A , C(S, B )) \; \iso \; C(A  \times S, B ) \;\iso\; C (S, C(A , B ))$$
Both natural homeomorphisms are instances of the Cartesian closedness of Steenrod's convenient category. Each continuous function $\pi$ from $A $ to $C(S, B )$ corresponds to a continuous function $\pi'$ from $S$ to $C(A ,B )$, which is defined by $\pi'(s)(a) = \pi(a)(s)$ for all $a \in A $, for all $s \in S$. Thus, if $\pi$ is a unital $*$-homomorphism, then for each $s \in S$, $\pi'(s)$ is a unital $*$-homomorphism, and vica versa, because the algebraic structure of $C(S, B )$ is defined pointwise. We conclude that the natural homeomorphism $ C(A , C(S, B )) \iso C (S, C(A , B ))$ restricts to a natural homeomorphism $\mathrm{Hom} (A , C(S, B )) \iso C( S, \mathrm{Hom}(A , B )).$
\end{proof}

For reference, we record that each unital $*$-homomorphism $\phi\: A  \To C(S) \tensor B $ corresponds to the continuous function $s \mapsto (\eval_s \tensor \mathrm{id}_B ) (\phi( \, \cdot\,))$.

\begin{lemma}\label{k3}
Let $V$ and $W$ be Banach spaces. Write $L(V,W)_1$ for the set of linear operators from $V$ to $W$ of norm at most $1$. The point-norm topology on $L(V,W)_1$ is equal to the compact-open topology.
\end{lemma}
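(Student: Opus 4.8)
The plan is to describe both topologies by their standard subbases and to compare them directly, rather than to invoke an abstract Arzel\`a--Ascoli statement. Recall that the point-norm topology on $L(V,W)_1$ is the initial topology for the evaluation maps $\mathrm{ev}_v\colon T \mapsto Tv$, $v \in V$, so a subbasis is given by the sets $N(v,U) = \{T \in L(V,W)_1 : Tv \in U\}$, with $U \subseteq W$ open. The compact-open topology has the subbasis of sets $M(K,U) = \{T \in L(V,W)_1 : T(K) \subseteq U\}$, with $K \subseteq V$ compact and $U \subseteq W$ open. It suffices to show that each subbasic set of either topology is open in the other.

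One inclusion is immediate. A singleton $\{v\}$ is compact, so $N(v,U) = M(\{v\},U)$ is itself a compact-open subbasic set. Hence the point-norm topology is contained in the compact-open topology, and the only real content is the reverse inclusion.

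For that reverse inclusion I would show each $M(K,U)$ is point-norm open, and this is where the hypothesis $\|T\| \le 1$ is essential. Fix $T_0 \in M(K,U)$. Since $T_0(K)$ is compact and contained in the open set $U$, choose $\epsilon > 0$ so small that every point of $W$ within distance $\epsilon$ of $T_0(K)$ already lies in $U$. Using compactness of $K$, cover it by finitely many balls $B(v_1,\epsilon/3), \dots, B(v_n,\epsilon/3)$, and set $\mathcal{N} = \{T \in L(V,W)_1 : \|Tv_i - T_0 v_i\| < \epsilon/3 \text{ for } i = 1,\dots,n\}$, a point-norm neighborhood of $T_0$. I claim $\mathcal{N} \subseteq M(K,U)$. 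Indeed, for $T \in \mathcal{N}$ and $v \in K$, pick $v_i$ with $\|v - v_i\| < \epsilon/3$; then, using $\|T\| \le 1$ and $\|T_0\| \le 1$,
\[ \|Tv - T_0 v\| \;\le\; \|T\|\,\|v - v_i\| + \|Tv_i - T_0 v_i\| + \|T_0\|\,\|v_i - v\| \;<\; \epsilon, \]
so $Tv$ lies within $\epsilon$ of $T_0(K)$ and hence in $U$. Therefore $T(K) \subseteq U$, proving $\mathcal{N} \subseteq M(K,U)$ and hence that $M(K,U)$ is point-norm open.

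The main obstacle is precisely this reverse inclusion: converting the finitely many pointwise estimates that define a point-norm neighborhood into a single uniform estimate across all of the compact set $K$. The three-term bound succeeds only because every operator in $L(V,W)_1$ is $1$-Lipschitz, so the family $L(V,W)_1$ is uniformly equicontinuous; this is the Arzel\`a--Ascoli phenomenon, and it is exactly where the restriction to the unit ball is used. Without a uniform Lipschitz bound the compact-open topology is strictly finer than the point-norm topology in general, so the norm constraint cannot be dropped. With both inclusions established, the two topologies coincide.
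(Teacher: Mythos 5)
Your proof is correct and is essentially the paper's argument recast in open-set rather than net-convergence language: both directions reduce to the same three-term triangle-inequality estimate, where the bound $\|T\| \le 1$ supplies the uniform equicontinuity needed to pass from finitely many pointwise conditions to a uniform one on the compact set. The paper phrases this as ``pointwise convergence of a net in $L(V,W)_1$ implies uniform convergence on compact subsets,'' but the covering of $K$ by finitely many $\epsilon$-balls and the ensuing estimate are identical to yours.
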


The point-norm topology is generated by subbasis elements of the form $\{t \suchthat  \| t( v) -w \| < \epsilon \}$ for $v \in V$, $w \in W$, and $\epsilon >0$. The compact-open topology is of course generated by subbasis elements of the form $\{ t \suchthat t(K) \subsetof U\}$ for $K \subsetof V$ compact, and $U \subsetof W$ open. Both Banach spaces are equipped with their norm topologies.

\begin{proof}
It is well known that convergence in the compact-open topology is equivalent to uniform convergence on compact subsets of the domain, whenever the codomain is a metric space. The point-norm topology is just the topology of pointwise convergence. Convergence of the former kind clearly implies convergence of the latter kind, so it remains only to show the converse.

Let $(t_\alpha)$ be a net in $L(V, W)_1$ converging pointwise to a linear operator $t_\infty$, itself automatically of norm at most $1$. Let $K$ be a compact subset of $V$, and let $\epsilon >0$. Being compact, the set $V$ is totally bounded, so we can cover $K$ by a finitely many open balls $B_\epsilon(v_1), B_\epsilon(v_2), \ldots, B_\epsilon(v_n)$ of radius $\epsilon$, with centers $v_1, v_2, \ldots, v_n \in V$. Consider $\alpha$ sufficiently large so that $\|t_\alpha(v_1) - t(v_1)\| < \epsilon$, $\|t_\alpha(v_2) - t_\alpha(v_2)\| < \epsilon$, $\ldots$, and $\|t_\alpha(v_n) - t_\alpha(v_n)\| < \epsilon$. Every element $v$ of $K$ is within distance $\epsilon$ of some $v_i$, so we calculate:
\begin{align*}
\|t_\alpha(v) - t(v)\|  & \leq \|t_\alpha(v) - t_\alpha(v_i)\| + \|t_\alpha(v_i) - t_\infty(v_i)\| + \| t_\infty(v_i) - t_\infty(v)\|
\\ & \leq \|t_\alpha\| \cdot \|v - v_i\| + \epsilon + \| t_\infty \| \cdot \| v - v_i\| \leq 3 \epsilon
\end{align*}
Thus, $(t_\alpha)$ converges uniformly to $t_\infty$ on $K$. Therefore, pointwise convergence implies uniform convergence on compact subsets.
\end{proof}

\begin{proposition}\label{k5}
We have a bijection, natural in the compact Hausdorff space $S$, and the unital C*-algebras $A$ and $B$,
$$\mathrm{Hom} (A , C(S) \tensor B) \; \iso \; C( S, \mathrm{Hom}(A , B)),$$
where $\mathrm{Hom}(A , B)$ denotes the set of all unital $*$-homomorphisms from $A $ to $B$, equipped with the point-norm topology. It takes each each unital $*$-homomorphism $\phi \: A  \To C(S) \tensor B$ to the function $s \mapsto (\eval_s \tensor \mathrm{id}) (\phi( \, \cdot\,))$.
\end{proposition}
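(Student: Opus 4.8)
The plan is to reduce the claim to Lemma \ref{k2} and Lemma \ref{k3}, exploiting that the proposition asks only for a \emph{bijection} of sets and not a homeomorphism. First I would observe that every unital $*$-homomorphism between unital C*-algebras is contractive, so $\mathrm{Hom}(A, B)$ is a subset of $L(A, B)_1$, the unit ball of bounded operators from $A$ to $B$. Lemma \ref{k3}, applied to the Banach spaces $A$ and $B$, identifies the point-norm topology on $L(A, B)_1$ with its compact-open topology; restricting along the inclusion $\mathrm{Hom}(A, B) \subseteq L(A, B)_1$ and using transitivity of the subspace topology, I would conclude that the point-norm topology on $\mathrm{Hom}(A, B)$ coincides with its (un-kaonized) compact-open topology.

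Next I would reconcile this with Lemma \ref{k2}. That lemma supplies a homeomorphism $\mathrm{Hom}(A, C(S) \tensor B) \iso C(S, \mathrm{Hom}(A, B))$ in which, by Definition \ref{k1}, the space $\mathrm{Hom}(A, B)$ carries the kaonization $k\tau$ of its compact-open topology $\tau$. The topologies $\tau$ (which is the point-norm topology, by the previous step) and $k\tau$ on $\mathrm{Hom}(A, B)$ may genuinely differ, but this is immaterial: since $S$ is compact Hausdorff, it is compactly generated, and for a compactly generated domain a function is continuous into a space $Y$ if and only if it is continuous into its kaonization $kY$. Concretely, kaonization leaves the subspace topology on each compact set unchanged, and continuity out of $S$ is detected on the compact subsets of $S$, so a map $S \To Y$ and the same map $S \To kY$ are continuous together. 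Hence the underlying set of $C(S, \mathrm{Hom}(A, B))$ computed with $k\tau$ agrees with the set of continuous functions computed with $\tau$, that is, with the point-norm topology.

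Combining the two steps, the bijection of Lemma \ref{k2} is exactly a bijection between $\mathrm{Hom}(A, C(S) \tensor B)$ and the set of point-norm-continuous functions $S \To \mathrm{Hom}(A, B)$, given by the same assignment $\phi \mapsto (s \mapsto (\eval_s \tensor \mathrm{id})(\phi(\,\cdot\,)))$ recorded after Lemma \ref{k2}. Naturality in $S$, $A$, and $B$ is inherited, since both the identification of point-norm with compact-open and the compact-generation argument involve no choices. The only real content, and the one point to handle carefully, is the topological bookkeeping of the second step: one must resist trying to prove that $k\tau$ equals the point-norm topology (it need not, as $\mathrm{Hom}(A, B)$ may fail to be compactly generated when $A$ is inseparable), and instead observe that the bijection survives precisely because the index space $S$ is compact, so the kaonization is invisible to maps out of $S$.
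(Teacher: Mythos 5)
Your proposal is correct and follows essentially the same route as the paper: invoke Lemma \ref{k3} to identify the point-norm and compact-open topologies on $\mathrm{Hom}(A,B)$, and then observe that since $S$ is compact, kaonizing the target does not change which maps out of $S$ are continuous, so Lemma \ref{k2} gives the desired bijection. The extra detail you supply (contractivity of unital $*$-homomorphisms, and the compact-image argument showing kaonization is invisible to maps out of a compact domain) is exactly the bookkeeping the paper leaves implicit.
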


\begin{proof}
This proposition is just lemma \ref{k2}, with a different topology on $\Hom(A , B )$. The point-norm topology on $\Hom(A , B)$ is equal to the compact-open topology by lemma \ref{k3}. Thus, the topology on $\mathrm{Hom} (A,B)$ in lemma \ref{k2} is equivalently the kaonization of the point-norm topology. Since $S$ is a compact Hausdorff space, the set of continuous functions from $S$ to $\Hom(A , B)$ is the same if we equip $\Hom(A , B)$ with just the point-norm topology, without kaonizing it. Thus, the proposition follows.
\end{proof}

\begin{lemma}\label{k6}
Let $T$ be a compact Hausdorff space. 
The bijection of proposition \ref{k5} takes the canonical unital $*$-homomorphism $\pi\: C(T)  \To C(T) \tensor B$, defined by $\pi(a) = a \tensor 1$ to the continuous function $\imath\: T \To \mathrm{Hom}(C(T), B)$, defined by $\imath(t)(a) = a(t)\cdot 1$.
\end{lemma}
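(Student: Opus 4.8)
The plan is to apply the explicit description of the bijection from Proposition \ref{k5} directly to the homomorphism $\pi$ and to compute its image, checking that the result coincides with $\imath$. There is no genuine obstacle here: the lemma merely unwinds the formula recorded in the statement of Proposition \ref{k5}. First I would confirm that $\pi$ lies in the domain of that bijection, i.e., that $\pi\colon C(T) \To C(T) \tensor B$, $\pi(a) = a \tensor 1$, is a unital $*$-homomorphism. This is immediate: $\pi$ is linear, it is multiplicative since $(a\tensor 1)(b \tensor 1) = ab \tensor 1$, it is $*$-preserving since $(a \tensor 1)^* = a^* \tensor 1$, and it is unital since $\pi(1) = 1 \tensor 1$. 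Hence $\pi \in \Hom(C(T), C(T) \tensor B)$, where we take $A = C(T)$ and $S = T$ in Proposition \ref{k5}.

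Next I would evaluate the bijection on $\pi$. By Proposition \ref{k5}, the bijection sends $\pi$ to the function $t \mapsto (\eval_t \tensor \mathrm{id})(\pi(\,\cdot\,))$. For each point $t \in T$ and each $a \in C(T)$, I would compute
$$(\eval_t \tensor \mathrm{id})(\pi(a)) = (\eval_t \tensor \mathrm{id})(a \tensor 1) = \eval_t(a) \cdot 1 = a(t) \cdot 1.$$
The function produced by the bijection is therefore $t \mapsto [\,a \mapsto a(t)\cdot 1\,]$, which is exactly $\imath$ as defined in the statement. Continuity of $\imath$, and the fact that each $\imath(t)$ is a unital $*$-homomorphism, need no separate verification: since the bijection of Proposition \ref{k5} has codomain $C(S, \Hom(A,B))$, the mere membership of $\pi$ in the domain automatically places its image in the space of continuous functions $T \To \Hom(C(T), B)$.

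The only point deserving mild care is the identification $C(S) \tensor B \iso C(S, B)$ invoked in the proof of Lemma \ref{k2}, under which $\eval_t \tensor \mathrm{id}$ is identified with evaluation at $t$. I would note in passing that this is precisely the correspondence recorded immediately after the proof of Lemma \ref{k2}, so the computation above is consistent with the stated form of the bijection, and the lemma follows.
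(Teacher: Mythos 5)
Your proposal is correct and follows essentially the same route as the paper: both simply apply the explicit formula $t \mapsto (\eval_t \tensor \mathrm{id})(\phi(\,\cdot\,))$ from Proposition \ref{k5} to $\pi$ and compute $(\eval_t \tensor \mathrm{id})(a \tensor 1) = a(t)\cdot 1$. The additional remarks you include (verifying $\pi$ is a unital $*$-homomorphism, and the identification $C(S)\tensor B \iso C(S,B)$) are harmless elaborations of what the paper leaves implicit.
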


\begin{proof}
Let $p$ be the function from $T$ to $\mathrm{Hom}(C(T), B)$ that corresponds to $\pi$ under the bijection of proposition \ref{k5}.
For each $t \in T$, and all $a \in C(T)$, we compute:
$$p(t)(a) = (\mathrm{eval_t} \tensor \mathrm{id})(\pi(a)) = (\mathrm{eval_t} \tensor \mathrm{id}) (a \tensor 1) = a(t) \tensor 1 = a(t) \cdot 1.$$
\end{proof}

\begin{theorem}\label{k7}
Let $\tilde S$  and $T$ be compact Hausdorff spaces, and let $B$ be a unital $C^*$-algebra. Let $S$ be a closed subset of $\tilde S$, with inclusion function $j\: S \hookrightarrow \tilde S$. Let $f$ be a continuous function from $S$ to $T$. Use the notation of proposition \ref{k5} and lemma \ref{k6}. The following are equivalent:
\begin{enumerate}
\item The function $\imath \circ f \: S \To \mathrm{Hom}(C(T), B )$ extends to a continuous function $\tilde f\: \tilde S \To \mathrm{Hom}(C(T), B )$. $$
\begin{tikzcd}
\tilde S \arrow[dotted]{rrd}{\tilde f}
&
&
\\
S \arrow[hook]{u}{j}\arrow{r}[swap]{f}
&
T  \arrow{r}[swap]{\imath}
&
\mathrm{Hom}(C(T), B )
\end{tikzcd}
$$
\item There is a unital $*$-homomorphism $\phi\: C(T) \To C(\tilde S) \tensor B $ such that $$ (j^\star \tensor \mathrm{id}) \circ \phi = (f^\star \tensor \mathrm{id}) \circ \pi  .$$ $$
\begin{tikzcd}
C(\tilde S) \tensor B  \arrow{d}[swap]{j^\star \tensor \mathrm{id}}
&
&
\\
C(S) \tensor B 
&
C(T) \tensor B \arrow{l}{f^\star \tensor \mathrm{id}}
&
C(T) \arrow{l}{\pi} \arrow[dotted]{llu}[swap]{\phi} 
\end{tikzcd}
$$
\end{enumerate}
\end{theorem}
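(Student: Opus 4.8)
The plan is to deduce the whole equivalence from the natural bijection $\Psi$ of Proposition \ref{k5}, specialized to $A = C(T)$, by reading off both conditions as statements about the continuous functions to which $\phi$ and the two composites correspond. Writing $\Psi_S$ for the instance of the bijection attached to a compact Hausdorff space $S$, I would first note that since $S$ is a closed subset of the compact Hausdorff space $\tilde S$ it is itself compact Hausdorff, so Proposition \ref{k5} applies to both $S$ and $\tilde S$. Given a unital $*$-homomorphism $\phi\: C(T) \To C(\tilde S) \tensor B$, I set $\tilde f = \Psi_{\tilde S}(\phi)$, the continuous function $\tilde S \To \Hom(C(T), B)$ given by $\tilde f(s) = (\eval_s \tensor \mathrm{id})(\phi(\,\cdot\,))$.

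The core of the argument is to identify the two sides of the equation in condition (2) under the single bijection $\Psi_S$, by invoking naturality in the space variable twice. Applied to the inclusion $j\: S \To \tilde S$, naturality gives
$$\Psi_S\big((j^\star \tensor \mathrm{id}) \circ \phi\big) = \Psi_{\tilde S}(\phi) \circ j = \tilde f \circ j,$$
since $j^\star$ is the map induced by $j$ and $\Psi$ is contravariant in the space. Applied to $f\: S \To T$, the same naturality together with Lemma \ref{k6}, which identifies $\Psi_T(\pi) = \imath$, gives
$$\Psi_S\big((f^\star \tensor \mathrm{id}) \circ \pi\big) = \Psi_T(\pi) \circ f = \imath \circ f.$$
Thus under $\Psi_S$ the left-hand composite of condition (2) corresponds to $\tilde f \circ j$ and the right-hand composite corresponds to $\imath \circ f$.

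With this in hand the equivalence is immediate from bijectivity of $\Psi_S$. For (2) $\Rightarrow$ (1): given $\phi$, the function $\tilde f = \Psi_{\tilde S}(\phi)$ satisfies $\tilde f \circ j = \imath \circ f$ by the displayed identities, so $\tilde f$ is the desired continuous extension. For (1) $\Rightarrow$ (2): given an extension $\tilde f$ of $\imath \circ f$, I set $\phi = \Psi_{\tilde S}^{-1}(\tilde f)$; then $\Psi_S((j^\star \tensor \mathrm{id}) \circ \phi) = \tilde f \circ j = \imath \circ f = \Psi_S((f^\star \tensor \mathrm{id}) \circ \pi)$, and injectivity of $\Psi_S$ forces $(j^\star \tensor \mathrm{id}) \circ \phi = (f^\star \tensor \mathrm{id}) \circ \pi$.

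I expect the only real care to lie in the bookkeeping of the naturality squares rather than in any hard analysis: one must check that both the restriction along $j$ and the precomposition with $f$ are genuinely instances of the same naturality-in-the-space transformation of Proposition \ref{k5}, with the contravariance matching that of $C(-)$, and that $\phi$ together with the two composites are all unital $*$-homomorphisms out of the one fixed algebra $A = C(T)$, so that $\Psi$ applies uniformly. All the topological content—that $\Hom(C(T),B)$ carries a well-behaved topology and that $\Psi$ is a bijection—has already been absorbed into Lemmas \ref{k2} and \ref{k3} and Proposition \ref{k5}, so nothing further is required on that front.
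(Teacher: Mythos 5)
Your proposal is correct and follows essentially the same route as the paper: the paper's proof is exactly the pair of naturality squares for $j$ and $f$ (drawn as one commutative diagram), combined with Lemma \ref{k6} to identify $\pi$ with $\imath$, and the equivalence then read off from bijectivity. Nothing is missing.
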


\begin{proof}
Apply proposition \ref{k5}, with $A=C(T)$ to obtain the following commutative diagram:
$$
\begin{tikzcd}
\Hom(C(T), C(\tilde S) \tensor B))
\arrow[leftrightarrow]{r}{\iso}
\arrow{d}[swap]{(j^\star \tensor \mathrm{id})\circ}
&
C(\tilde S, \Hom(C(T),B))
\arrow{d}{\circ j}
\\
\Hom(C(T), C(S) \tensor B))
\arrow[leftrightarrow]{r}{\iso} 
&
C(S, \Hom(C(T),B))
\\
\Hom(C(T), C(T) \tensor B))
\arrow[leftrightarrow]{r}{\iso}
\arrow{u}{(f^\star \tensor \mathrm{id})\circ}
&
C(T, \Hom(C(T),B))
\arrow{u}[swap]{\circ f}
\end{tikzcd}
$$
The homomorphism $\pi$ is an element of $\Hom(C(T), C(T) \tensor B))$, and the map $\imath$ is an element of $C(T, \Hom(C(T),B))$. By lemma \ref{k6}, the two elements correspond to each other under the indicated bijection. Condition (1) is equivalently that there exists an element $\tilde f$ of $C(\tilde S, \Hom(C(T),B))$ whose image in $C(S, \Hom(C(T),B))$ is equal to the image of $\imath$ in that set. Condition (2) is equivalently that there exists an element $\phi$ of $\Hom(C(T), C(\tilde S) \tensor B))$ whose image in $\Hom(C(T), C(S) \tensor B))$ us equal to the image of $\pi$ in that set. The commutative diagram demonstrates that the two conditions are equivalent, establishing the theorem.\end{proof}


We close this section by observing that the space $\Hom(C(T), M_n(\CC))$ is necessarily compact for every $n$.

\begin{lemma}\label{k9}
Let $A$ be a unital C*-algebra, and let $n$ be a positive integer. The space $\mathrm{Hom}(A , M_n(\CC))$ of unital $*$-homomorphisms from $A $ to $M_n(\CC)$, equipped with the compact-open topology, or equivalently, with the point-norm topology, is compact. If $A$ is separable, then $\mathrm{Hom}(A , M_n(\CC))$ has a countable basis for the point-norm topology.
\end{lemma}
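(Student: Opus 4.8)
The plan is to realize $\mathrm{Hom}(A, M_n(\CC))$ as a closed subspace of a compact product and then, in the separable case, to metrize it. First I would invoke the standard fact that every unital $*$-homomorphism of C*-algebras is contractive, so that $\|\phi(a)\| \leq \|a\|$ for every $\phi \in \mathrm{Hom}(A, M_n(\CC))$ and every $a \in A$. Consequently each such $\phi$ may be regarded as a point of the product space $P = \prod_{a \in A} \overline{B}_{\|a\|}$, where $\overline{B}_{\|a\|}$ denotes the closed ball of radius $\|a\|$ in $M_n(\CC)$. Because $M_n(\CC)$ is finite-dimensional, each such ball is compact, so $P$ is compact by Tychonoff's theorem. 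The point-norm topology on $\mathrm{Hom}(A, M_n(\CC))$ is precisely the subspace topology inherited from the product topology on $P$, since both are generated by the evaluation maps $\phi \mapsto \phi(a)$; and by lemma \ref{k3} this also coincides with the compact-open topology, since homomorphisms are contractive.

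Next I would argue that $\mathrm{Hom}(A, M_n(\CC))$ is closed in $P$. The property of being a unital $*$-homomorphism is cut out by the equations $\psi(a + b) = \psi(a) + \psi(b)$, $\psi(\lambda a) = \lambda \psi(a)$, $\psi(ab) = \psi(a)\psi(b)$, $\psi(a^*) = \psi(a)^*$, and $\psi(1) = 1$, ranging over all $a, b \in A$ and $\lambda \in \CC$. Each such equation defines a closed subset of $P$: the evaluations are continuous in the product topology and the algebraic operations of $M_n(\CC)$ are continuous, so each equation is the vanishing locus of a continuous $M_n(\CC)$-valued function of $\psi$. Intersecting these closed sets shows $\mathrm{Hom}(A, M_n(\CC))$ is closed, hence compact as a closed subspace of the compact space $P$. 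This establishes the first assertion.

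For the second assertion, suppose $A$ is separable and fix a countable dense subset $\{a_k\}_{k \geq 1}$ of $A$. I would introduce the metric
$$d(\phi, \psi) = \sum_{k=1}^\infty 2^{-k} \frac{\|\phi(a_k) - \psi(a_k)\|}{1 + \|a_k\|},$$
whose terms are bounded by $2^{1-k}$ by contractivity, so the series converges. The claim is that $d$ induces the point-norm topology. Each partial sum is point-norm continuous, being a finite combination of the continuous evaluations $\phi \mapsto \phi(a_k)$, and the series converges uniformly, so $d(\,\cdot\,, \psi)$ is point-norm continuous and every $d$-ball is point-norm open. Conversely, to see that every point-norm subbasic set $\{\phi : \|\phi(a) - m\| < \epsilon\}$ is $d$-open, I would use density: choosing $a_k$ close to $a$ and invoking contractivity, $\|\phi(a) - \phi_0(a)\|$ is controlled by $2\|a - a_k\|$ plus $\|\phi(a_k) - \phi_0(a_k)\|$, the latter being small once $d(\phi, \phi_0)$ is small. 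Hence the two topologies agree, $\mathrm{Hom}(A, M_n(\CC))$ is metrizable, and a compact metric space is automatically second countable, yielding the desired countable basis.

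The closedness argument is routine; the step I expect to require the most care is the metrization, specifically the two-sided comparison of $d$ with the point-norm topology, where the uniform convergence of the defining series and the contractivity estimate through a dense sequence do all the work.
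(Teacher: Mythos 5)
Your proof is correct, but it follows a more self-contained route than the paper's. The paper decomposes $M_n(\CC)$ coordinatewise as $\CC \times \cdots \times \CC$, identifies $\mathrm{Hom}(A, M_n(\CC))$ with a closed subset of $L(A,\CC)_1 \times \cdots \times L(A,\CC)_1$ inside Steenrod's convenient category, and then simply cites Alaoglu's theorem for compactness and a textbook fact (Pedersen, exercise 2.5.3) for metrizability of the dual unit ball of a separable Banach space. You instead unwind Alaoglu into its underlying Tychonoff argument --- embedding $\mathrm{Hom}(A, M_n(\CC))$ as a closed subset of the compact product $\prod_{a \in A} \overline{B}_{\|a\|}$ cut out by the algebraic identities --- and you construct the metric $d(\phi,\psi) = \sum_k 2^{-k}\|\phi(a_k)-\psi(a_k)\|/(1+\|a_k\|)$ explicitly. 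The mathematical engine is the same in both cases (contractivity of unital $*$-homomorphisms plus weak-$*$ compactness of bounded sets of maps into a finite-dimensional target), but your version avoids the function-space formalism and the external citations at the cost of more hands-on verification; the paper's version is shorter and slots directly into the compactly generated framework it has already set up, which matters elsewhere in the argument (e.g.\ the remark that kaonization preserves compact sets). Two small points you should make explicit: that $d$ is genuinely a metric, i.e.\ $d(\phi,\psi)=0$ forces $\phi=\psi$, which follows from density of $\{a_k\}$ together with the continuity (contractivity) of $\phi$ and $\psi$; and that since a compact Hausdorff space is already compactly generated, the kaonization in Definition \ref{k1} does not alter the topology here, so proving the claims for the point-norm topology suffices.
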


\begin{proof}
The two topologies are equivalent as a special case of lemma \ref{k3}. To observe compactness, we put the set $\mathrm{Hom}(A , M_n(\CC))$ through the following isomorphisms in Steenrod's convenient category:
$$ C(A , M_n(\CC)) \; \iso \; C(A , \CC \times \cdots \times \CC) \; \iso \; C(A , \CC) \times \cdots \times C(A , \CC)$$
The set $\mathrm{Hom}(A , M_n(\CC))$ is closed in $C(A , M_n(\CC))$. A unital $*$-homomorphism has norm at most $1$, so $\mathrm{Hom}(A , M_n(\CC))$ corresponds to a closed subset of $L(A , \CC)_1 \times \cdots \times L(A , \CC)_1$. By lemma \ref{k3}, the compact-open topology on $L(A , \CC)_1$ is just the point-norm topology, or equivalently, the weak* topology. By Alaoglu's theorem, the unit ball $L(A , \CC)_1$ is compact. Thus, the topology on $L(A , \CC)_1$ as a subspace of $C(A , \CC)$ in Steenrod's convenient category is just the weak* topology. Therefore the space $\mathrm{Hom}(A , M_n(\CC))$ is compact. Furthermore, if $A$ is separable, then the unit ball $L(A , \CC)_1$ is metrizable \cite{Pedersen}*{exercise 2.5.3}, implying the same for $\mathrm{Hom}(A , M_n(\CC))$. Of course, for compact Hausdorff spaces metrizability is equivalent to the existence of a countable basis.
\end{proof}

\section{quantum loops}

We write $S^1$ for the unit circle, $D^2$ for the unit disk, and $j\: S^1 \hookrightarrow D^2$ for the inclusion map.

\begin{definition}\label{l1}
Let $T$ be a compact Hausdorff space. A loop in $T$ is a function $f\: S^1 \To T$. A loop $f$ is \underline{quantum nullhomotopic} just in case there is a nonzero finite-dimensional Hilbert space $H$ and a unital $*$-homomorphism $\phi\:C(T) \To C(D^2) \tensor L(H)$ such that $ (j^\star \tensor \mathrm{id})\phi(a) = f^\star(a) \tensor 1_H$ for all $a \in C(T)$.
$$
\begin{tikzcd}
C(D^2) \otimes L(H) \arrow{d}[swap]{j^\star \tensor \mathrm{id}}
&
&
\\
C(S^1) \otimes L(H)
&
C(T) \otimes L(H) \arrow{l}{f^\star \tensor \mathrm{id}}
&
C(T) \arrow[dotted]{llu}[swap]{\phi} \arrow{l}{a \tensor 1 \mapsfrom a}
\end{tikzcd}
$$
\noindent Equivalently, $f$ is nullhomotopic if and only if there exists an positive integer $n$, and a unital $*$-homomorphism $\phi\: C(T) \To M_n(C(D^2))$ such that applying $j^\star$ to every element of the matrix $\phi(a)$ yields the matrix $f^\star(a) \cdot I_n$ for all $a \in C(T)$.
\end{definition}

\begin{corollary}\label{l2}
Let $T$ be a compact Hausdorff space. A loop $f$ in $T$ is quantum nullhomotopic if and only if there is a positive integer $n$ such that the loop $\imath_\star(f) = \imath \circ f$ in $\Hom(C(T), M_n(\CC))$ is nullhomotopic in the ordinary sense, where $\imath$ is defined by $\imath(t)(a) = a (t) \cdot I_n$.
\end{corollary}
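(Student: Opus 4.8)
The plan is to read this off directly from Theorem \ref{k7}. For a fixed positive integer $n$, take $B = M_n(\CC)$, which is $L(H)$ for $H = \CC^n$; then the map $\imath$ of Lemma \ref{k6}, given by $\imath(t)(a) = a(t) \cdot 1$, is precisely the map $\imath(t)(a) = a(t) \cdot I_n$ of the statement, since the unit of $M_n(\CC)$ is $I_n$. Applying Theorem \ref{k7} with $\tilde S = D^2$, $S = S^1$, and the inclusion $j\: S^1 \hookrightarrow D^2$ then yields, for this fixed $n$, the equivalence of its condition (1), that $\imath \circ f\: S^1 \To \Hom(C(T), M_n(\CC))$ extends along $j$ to a continuous map on $D^2$, with its condition (2), that there is a unital $*$-homomorphism $\phi\: C(T) \To C(D^2) \tensor M_n(\CC)$ satisfying $(j^\star \tensor \mathrm{id}) \circ \phi = (f^\star \tensor \mathrm{id}) \circ \pi$.

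Next I would identify each condition with a side of the corollary. Computing $(f^\star \tensor \mathrm{id})(\pi(a)) = (f^\star \tensor \mathrm{id})(a \tensor 1) = f^\star(a) \tensor 1_H$ shows that condition (2) is exactly the defining condition of quantum nullhomotopy in Definition \ref{l1} for the Hilbert space $H = \CC^n$. For condition (1), I invoke the elementary fact recalled in the introduction that a loop is nullhomotopic in the ordinary sense exactly when it extends over $D^2$; hence condition (1) says precisely that the loop $\imath \circ f$ in $\Hom(C(T), M_n(\CC))$ is nullhomotopic. Finally I would quantify over $n$: since $L(H) \iso M_{\dim H}(\CC)$ and $n$ ranges over all positive integers as $H$ ranges over the nonzero finite-dimensional Hilbert spaces, the existential quantifier over $H$ in Definition \ref{l1} matches the existential quantifier over $n$ in the corollary. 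Chaining the equivalences gives the result.

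As this is a corollary of Theorem \ref{k7}, I expect no real obstacle; the only points needing care are bookkeeping. These are: matching the $\imath$ of the statement with that of Lemma \ref{k6} under $L(\CC^n) = M_n(\CC)$, so that the abstract unit becomes $I_n$; aligning the two existential quantifiers, over $H$ and over $n$; and confirming that ordinary nullhomotopy is meaningful in the codomain $\Hom(C(T), M_n(\CC))$, where the relevant topology is the point-norm topology and Lemma \ref{k9} even ensures compactness (and metrizability when $C(T)$ is separable), although only the characterization of nullhomotopy by extension over $D^2$, valid in any space, is actually used.
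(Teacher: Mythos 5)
Your proposal is correct and follows exactly the paper's route: the paper's entire proof is ``Apply theorem \ref{k7}, with $S = S^1$ and $\tilde S = D^2$,'' and you have simply spelled out the bookkeeping (taking $B = M_n(\CC)$, matching condition (2) with Definition \ref{l1}, identifying ordinary nullhomotopy with extension over $D^2$, and aligning the quantifiers over $H$ and $n$) that the paper leaves implicit.
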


\begin{proof}
Apply theorem \ref{k7}, with $S = S^1$ and $\tilde S = D^2$.
\end{proof}

\begin{proposition}\label{l3}
Let $g$ be a continuous function from a compact Hausdorff space $T_1$ to a compact Hausdorff space $T_2$. If a loop $f\: S^1 \To T_1$ is quantum nullhomotopic in $T$, then the loop $g \circ f \: S^1 \To T_2$ is quantum nullhomotopic in $T_2$.
\end{proposition}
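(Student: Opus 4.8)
The plan is to exploit the contravariant functoriality of the assignment $T \mapsto C(T)$, $g \mapsto g^\star$. The continuous map $g$ induces a unital $*$-homomorphism $g^\star \: C(T_2) \To C(T_1)$ by precomposition, $g^\star(b) = b \circ g$, and the dual of the composite $g \circ f$ is the composite of the duals in the reverse order, $(g \circ f)^\star = f^\star \circ g^\star$. This identity is exactly what is needed to transport a witness of quantum nullhomotopy along $g$, so the whole argument is a short diagram chase with no topological or C*-algebraic input beyond the definition.

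Concretely, I would begin with a witness for the quantum nullhomotopy of $f$: a nonzero finite-dimensional Hilbert space $H$ and a unital $*$-homomorphism $\phi \: C(T_1) \To C(D^2) \tensor L(H)$ satisfying $(j^\star \tensor \mathrm{id})\phi(a) = f^\star(a) \tensor 1_H$ for all $a \in C(T_1)$, as furnished by definition \ref{l1}. I would keep the same Hilbert space $H$ and set $\psi = \phi \circ g^\star \: C(T_2) \To C(D^2) \tensor L(H)$. Being a composite of unital $*$-homomorphisms, $\psi$ is again a unital $*$-homomorphism, and so it is a legitimate candidate witness for the quantum nullhomotopy of $g \circ f$.

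It then remains only to verify the defining commutation relation for $\psi$. For each $b \in C(T_2)$ I would compute
$$ (j^\star \tensor \mathrm{id})\psi(b) = (j^\star \tensor \mathrm{id})\phi(g^\star(b)) = f^\star(g^\star(b)) \tensor 1_H = (g \circ f)^\star(b) \tensor 1_H, $$
where the middle equality applies the hypothesis on $\phi$ to the element $g^\star(b) \in C(T_1)$, and the last equality is the contravariant functoriality noted above. This is precisely the condition of definition \ref{l1} for the loop $g \circ f$, witnessed by $H$ and $\psi$, so $g \circ f$ is quantum nullhomotopic in $T_2$.

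I do not expect any substantive obstacle: the same Hilbert space serves both loops, so no dimension or existence question arises, and the argument reduces to tracking which C*-algebra each element inhabits. The only point requiring a moment's care is to apply the hypothesis on $\phi$ to $g^\star(b)$ rather than to $b$ itself. (One could alternatively route the proof through corollary \ref{l2}, pushing the ordinary nullhomotopy of $\imath \circ f$ forward along the induced map on $\Hom(-,M_n(\CC))$, but the direct functorial verification above is cleaner.)
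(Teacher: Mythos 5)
Your proof is correct, but it takes a genuinely different route from the paper. You argue directly at the level of Definition \ref{l1}: keep the same Hilbert space $H$, set $\psi = \phi \circ g^\star$, and verify $(j^\star \tensor \mathrm{id})\psi(b) = (g\circ f)^\star(b) \tensor 1_H$ by applying the hypothesis on $\phi$ to $g^\star(b)$ and invoking $(g\circ f)^\star = f^\star \circ g^\star$. The paper instead routes through Corollary \ref{l2} (hence through Theorem \ref{k7}): it fixes $n$ with $\imath_\star(f)$ nullhomotopic in $\Hom(C(T_1), M_n(\CC))$, checks pointwise that $\imath_\star(g\circ f) = (\,\cdot\, \circ g^\star) \circ \imath_\star(f)$, and concludes because a continuous image of a nullhomotopic loop is nullhomotopic --- this is exactly the alternative you mention parenthetically at the end. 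Your version is shorter and needs no topological input (no continuity of $\circ\, g^\star$, no functoriality of $\Hom(C(\,-\,), M_n(\CC))$ into Steenrod's category), only the contravariant functoriality of $C(\,-\,)$ and the fact that a composite of unital $*$-homomorphisms is one. The paper's version, while heavier, keeps the argument inside the topological picture that the surrounding results (Propositions \ref{l4}, \ref{m4}, and the $\RP^2$ corollary) all live in, so it doubles as a worked illustration of the extension criterion. Both proofs preserve the witness size ($n$, respectively $\dim H$), so there is no loss either way.
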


\begin{proof} Apply corollary \ref{l2}.  Let $n$ be a positive integer such that the loop $\imath_\star(f)\: S^1 \To \Hom(C(T_1), M_n(\CC))$ is nullhomotopic in the ordinary sense. We show that the loop $ \imath_\star (g \circ f)\: S^1 \To \Hom(C(T_2), M_n(\CC) )$ is a continuous image of the loop $\imath_\star(f)$. 

$$
\begin{tikzcd}
S^1 \arrow{r}{\imath_\star(f)} \arrow{rd}[swap]{\imath_\star (g \circ f)}
&
\Hom(C(T_1), M_n(\CC)) \arrow{d}{\circ g^\star}
\\
&
\Hom(C(T_2), M_n(\CC))
\end{tikzcd}
$$

For each point $s$ on the circle $S^1$, and all $a_2 \in C(T_2)$, we calculate that
$$\imath_\star (g \circ f)(s)(a_2) = a_2(g(f(s))) \cdot I_n = g^\star(a_2)(f(s)) \cdot I_n = \imath_\star(f)(s) ( g^\star(a_2)) = (\imath_\star(f)(s) \circ g^\star)(a_2).$$
Thus, the homomorphism $\imath_\star (g \circ f)(s)$ is obtained by composing $\imath_\star(f)(s)$ with $g^\star$. This is a continuous function from $\Hom(C(T_1), M_n(\CC) )$ to $\Hom(C(T_2), M_n(\CC) )$, since $\Hom(C(\, - \,), M_n(\CC) )$ is a functor from the category of compact Hausdorff spaces and continuous functions to Steenrod's convenient category. Being a continuous image of a nullhomotopic loop, the loop $\imath_\star (g \circ f)$ is also nullhomotopic. Appealing a second time to corollary \ref{l2}, we conclude that $g \circ f$ is quantum nullhomotopic.
\end{proof}

\begin{proposition}\label{l4}
The identity function $f\:S^1 \To S^1$ is not quantum nullhomotopic as a loop in $S^1$.
\end{proposition}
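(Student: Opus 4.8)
The plan is to reduce the statement, via Corollary \ref{l2}, to a computation in algebraic topology. By that corollary, the identity loop is quantum nullhomotopic if and only if for some positive integer $n$ the loop $\imath \colon S^1 \To \Hom(C(S^1), M_n(\CC))$, given by $\imath(t)(a) = a(t)\cdot I_n$, is nullhomotopic in the ordinary sense. I would therefore fix an arbitrary $n$ and show that this loop is never nullhomotopic. The key step is to identify the space $\Hom(C(S^1), M_n(\CC))$ explicitly with the unitary group $\mathrm{U}(n)$, after which the question becomes a standard fundamental-group computation.

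To set up the identification, recall that $C(S^1)$ is generated as a C*-algebra by the single unitary $u$ given by $u(z) = z$; indeed, trigonometric polynomials are dense by Stone--Weierstrass. Hence a unital $*$-homomorphism $\phi \colon C(S^1) \To M_n(\CC)$ is determined by the unitary matrix $\phi(u) \in \mathrm{U}(n)$, and conversely every unitary $U \in \mathrm{U}(n)$ arises: continuous functional calculus supplies a unital $*$-homomorphism sending $a \mapsto a(U)$, since $\spec(U) \subseteq S^1$. This yields a bijection $\phi \mapsto \phi(u)$ between $\Hom(C(S^1), M_n(\CC))$ and $\mathrm{U}(n)$. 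I would then verify that it is a homeomorphism onto $\mathrm{U}(n)$ with its usual topology: the map is continuous because evaluation at $u$ is continuous for the point-norm topology, and since the domain is compact by Lemma \ref{k9} and $\mathrm{U}(n)$ is Hausdorff, a continuous bijection is automatically a homeomorphism.

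Under this identification the loop $\imath$ becomes $t \mapsto \imath(t)(u) = u(t)\cdot I_n = t\, I_n$, that is, the loop $t \mapsto t\,I_n$ in $\mathrm{U}(n)$. To show this is not nullhomotopic, I would use the standard fact that the determinant $\det \colon \mathrm{U}(n) \To \mathrm{U}(1) = S^1$ induces an isomorphism $\pi_1(\mathrm{U}(n)) \iso \ZZ$, the fiber $\mathrm{SU}(n)$ being simply connected. Since $\det(t\,I_n) = t^n$, the composite $\det \circ \imath$ is the loop $t \mapsto t^n$, of winding number $n$. As $n \geq 1$, this winding number is nonzero, so $\det \circ \imath$ is not nullhomotopic in $S^1$, and therefore neither is $\imath$ in $\mathrm{U}(n)$. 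Because $n$ was arbitrary, Corollary \ref{l2} yields that the identity loop is not quantum nullhomotopic.

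The only genuinely delicate point is the topological identification of $\Hom(C(S^1), M_n(\CC))$ with $\mathrm{U}(n)$, and even that is routine once compactness from Lemma \ref{k9} is invoked to upgrade the continuous algebraic bijection to a homeomorphism; everything else is the classical computation that $\pi_1(\mathrm{U}(n)) \iso \ZZ$ is detected by the determinant.
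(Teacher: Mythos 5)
Your proposal is correct and follows essentially the same route as the paper: reduce via Corollary \ref{l2}, map $\Hom(C(S^1), M_n(\CC))$ to $\mathrm{U}(n)$ by evaluating at the generating unitary, compose with the determinant, and observe that the resulting loop $s \mapsto s^n$ has nonzero winding number. The only cosmetic difference is that the paper needs merely the continuity of the evaluation map (to push a nullhomotopy forward), whereas you establish the full homeomorphism with $\mathrm{U}(n)$ and invoke $\pi_1(\mathrm{U}(n)) \iso \ZZ$; both are valid and the extra strength is harmless.
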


\begin{proof}
Let $f \:S^1 \To S^1$ be the identity function, and suppose that $f$ is quantum nullhomotopic as a loop in $S^1$. By corollary \ref{l2}, there is a positive integer $n$ such that the loop $\imath_\star(f)\: S^1 \To \Hom (C(S^1), M_n(\CC))$ is nullhomotopic. We will compose $\imath_\star(f)$ with two other continuous functions to obtain a loop which we know is not nullhomotopic, arriving at a contradiction.

First, let $z$ be the inclusion of $S^1$ into $\CC$; it is a unitary operator in $C(S^1)$. Evaluation at $z$ is a function from $\Hom(C(S^1), M_n(\CC))$ to the space $U(n)$ of unitary $n \times n$ matrices. This function $\eval_z$ is continuous because all evaluation functions are continuous in Steenrod's convenient category.

Second, the determinant is a continuous function from $U(n)$ to $S^1$. Thus, we obtain a nullhomotopic loop $\mathrm{det} \circ \eval_z \circ f$ in $S^1$.
$$
\begin{tikzcd}
S^1 \arrow{r}{\imath_\star(f)}
&
\Hom(C(S^1, M_n(\CC)) \arrow{r}{\eval_z}
&
U(n) \arrow{r}{\mathrm{det}}
&
S^1
\end{tikzcd}
$$
For each point $s$ on the circle $S^1$ we calculate that
$$ ( \mathrm{det} \circ \eval_z \circ \imath_\star(f)) (s) = \mathrm{det}(z(f(s)) \cdot I_n) = \mathrm{det}(s \cdot I_n) = s^n.$$
It is a basic fact that this loop is not nullhomotopic. We have reached a contradiction. Therefore, $f$ is not quantum nullhomotopic.
\end{proof}

\begin{lemma}\label{l5}
Let $B$ a be a unital C*-algebra, and let $z\: S^1 \hookrightarrow \CC$ be the inclusion map. Evaluation at $z$ is a homeomorphism 
$$\Hom(C(S^1), B) \To U(B),$$
where $U(B)$ denotes the set of unitary operators in $B$, equipped with the norm topology.
\end{lemma}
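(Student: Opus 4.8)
The plan is to identify $\Hom(C(S^1), B)$ with $U(B)$ by means of the universal property of $C(S^1)$, and then to verify continuity in each direction separately. First I would recall that $C(S^1)$ is the universal unital C*-algebra generated by a single unitary: the element $z$ is unitary, the Laurent polynomials in $z$ and $z^* = z\inv$ are dense by the Stone--Weierstrass theorem, and for every unitary $u$ in a unital C*-algebra $B$ the continuous functional calculus furnishes a unital $*$-homomorphism $\phi_u \: C(S^1) \To B$ with $\phi_u(z) = u$. (Concretely, the spectrum of $u$ is contained in $S^1$, so the restriction $C(S^1) \To C(\spec(u))$ followed by the functional calculus $C(\spec(u)) \To B$ does the job.) Since $z$ generates $C(S^1)$ as a C*-algebra, such a $\phi_u$ is unique; and conversely $\eval_z(\phi) = \phi(z)$ is a unitary for every $\phi \in \Hom(C(S^1), B)$ because $*$-homomorphisms carry unitaries to unitaries. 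This exhibits $\eval_z \: \Hom(C(S^1), B) \To U(B)$ as a bijection, with inverse $u \mapsto \phi_u$.

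Continuity of $\eval_z$ is immediate: in Steenrod's convenient category every evaluation map is continuous, so $\phi \mapsto \phi(z)$ is continuous from $\Hom(C(S^1), B)$ into $B$ with its norm topology, and its image lies in $U(B)$.

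The substance of the lemma is the continuity of the inverse $u \mapsto \phi_u$. Here I would first replace the target topology by the bare point-norm topology: the compact-open and point-norm topologies on $\Hom(C(S^1), B) \subseteq L(C(S^1), B)_1$ agree by lemma \ref{k3}, and since $U(B)$ is a metric subspace of $B$, hence compactly generated, a map out of $U(B)$ is continuous for the kaonization of a topology exactly when it is continuous for that topology itself. As the point-norm topology is the initial topology for the evaluations $\phi \mapsto \phi(a)$, it then suffices to show that $u \mapsto \phi_u(a)$ is norm-continuous for each fixed $a \in C(S^1)$. For a Laurent polynomial $a = \sum_{k=-N}^{N} c_k z^k$ this is clear, since $\phi_u(a) = \sum_{k=-N}^{N} c_k u^k$ depends norm-continuously on $u$ (multiplication is jointly norm-continuous on the bounded set $U(B)$, and $u \mapsto u^*$ is an isometry). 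For general $a$ I would approximate $a$ uniformly by a Laurent polynomial $p$ with $\|a - p\| < \epsilon$, and use that every $\phi_u$ is a contraction to bound $\|\phi_{u_\alpha}(a) - \phi_u(a)\|$ by $2\epsilon$ plus the Laurent-polynomial term, which tends to $0$; this is the standard $3\epsilon$ estimate.

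The main obstacle is precisely this last density argument, and more specifically the input that makes it run: the universal property of $C(S^1)$, i.e. the spectral theorem for a single unitary, which both defines the inverse map and supplies the uniform contractivity of the homomorphisms $\phi_u$ needed to pass from Laurent polynomials to arbitrary continuous functions. Once the bijection and the two continuity statements are in hand, the homeomorphism $\eval_z$ follows.
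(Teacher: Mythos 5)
Your proposal is correct and follows essentially the same route as the paper's proof: the bijection via the universal property of $C(S^1)$ for a single unitary, continuity of $\eval_z$ from Cartesian closedness, and continuity of the inverse by checking it on the dense $*$-subalgebra generated by $z$ and extending with the standard $3\epsilon$ contraction estimate. The only cosmetic differences are that you invoke the functional calculus where the paper cites $C^*(\ZZ)$, and you dispatch the kaonization issue up front via compact generation of the metrizable space $U(B)$ rather than, as the paper does, at the end via preservation of compact subsets.
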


\begin{proof}
Recall that the C*-algebra $C(S^1)$ is isomorphic to the group C*-algebra $\mathrm C^*(\ZZ)$ \cite{Davidson}*{proposition VII.1.1}. The inclusion function $z \: S^1 \hookrightarrow \CC$ is mapped to the generator $u_1$ of $\mathrm C^*(\ZZ)$. The C*-algebra $\mathrm C^*(\ZZ)$ is evidently the universal C*-algebra for a single unitary operator: for any C*-algebra $B$, there is a bijective correspondence between the unital $*$-homomorphisms from $\mathrm C^*(\ZZ)$ to $B$, and the unitary operators in $B$. Each such unital $*$-homomorphism sends the generator $u_1$ to the corresponding unitary in $B$. Therefore, we have a bijection $\eval_{z}$ from $\Hom(C(S^1), B)$ to $U(B)$, defined by $\rho \mapsto \rho(z)$.

The bijection $\eval_{z}$ is continuous, because all evaluation functions are continuous in Steenrod's convenient category. To show that it is a homeomorphism, let $(\rho_\lambda)$ be a net in $\Hom(C(S^1), B)$ whose image $(\eval_{z}(\rho_\lambda))$ converges to $\eval_{z}(\rho)$ for some homomorphism $\rho$ in $\Hom(C(S^1), B)$. In other words, $(\rho_\lambda(z))$ converges to $\rho(z)$. The operations that make up the $*$-algebra structure of $B$ are all continuous in the norm topology, so in fact, $(\rho_\lambda(a))$ converges to $\rho(a)$ for all $a$ in the unital $*$-subalgebra $A_0$ of $C(S^1)$ that is generated by $z$.

Let $a$ be any element of $C(S^1)$. The algebra $A_0$ is dense in $C(S^1)$ by the Stone-Weierstrass theorem, so we can choose a sequence $(a_n)$ in $A_0$ such that $\|a_n - a \| \leq 1/n$ for each positive integer $n$. We now estimate that for each positive integer $n$,
\begin{align*}
\|\rho(a) - \rho_\lambda(a) \| &\leq \|\rho(a) - \rho(a_n) \| + \| \rho(a_n) - \rho_\lambda(a_n)\| + \| \rho_\lambda(a_n) - \rho_\lambda(a)\|
\\ &
\leq \| \rho\| \cdot \| a_n - a\| + \| \rho(a_n) - \rho_\lambda(a_n)\| + \|\rho_\lambda\| \cdot \| a_n - a\|
\\ &
\leq \frac 1 n + \| \rho(a_n) - \rho_\lambda(a_n)\| + \frac 1 n 
\\ &= \frac 2 n + \| \rho(a_n) - \rho_\lambda(a_n)\|
\end{align*}
The term $\| \rho(a_n) - \rho_\lambda(a_n)\|$ converges to $0$ as $\lambda$ goes to infinity, so $\limsup_\lambda \|\rho(a) - \rho_\lambda(a) \| \leq 2 /n$. We have this estimate for each positive integer $n$, so $\lim_\lambda \| \rho (a) - \rho_\lambda(a)\| = 0$. In other words, $\rho_\lambda$ converges to $\rho$ in the point-norm topology. Therefore, $\eval_{z}^{-1} \: U(B) \To \Hom(C(S^1), B)$ is norm-(point-norm) continuous, and thus, by lemma \ref{k3}, it is norm-(compact-open) continuous.

It is an elementary fact about compactly generated spaces that kaonization does not introduce or eliminate compact subset. In this case, if a set $K \subsetof \Hom(C(S^1), B)$ if compact in the kaonized compact-open topology, then it is compact in the compact-open topology itself, and vice versa. Thus, we have shown that $\eval_{z}$ induces a one-to-one correspondence between the compact subsets of $\Hom(C(S^1), B)$ and the compact subsets of $U(B)$.  Since both spaces are compactly generated, $\eval_z$ must be a homeomorphism.
\end{proof}

\begin{theorem}
Let $H$ be a separable infinite-dimensional Hilbert space.
There is a unital $*$-homomorphism $\phi\: C(S^1) \To C(D^2) \tensor L(H)$ such that $(j^\star \tensor \mathrm{id}) (\phi(a)) = a \tensor 1_H$ for all $a \in C(S^1)$, where $j \: S^1 \hookrightarrow D^2$ is the inclusion map.
$$
\begin{tikzcd}
C(D^2) \tensor L(H)
\arrow{d}[swap]{j^\star \tensor \mathrm{id}}
&
&
\\
C(S^1) \tensor L(H)
&
&
C(S^1)
\arrow{ll}{ a \tensor 1 \mapsfrom a}
\arrow{ull}[swap]{\phi}
\end{tikzcd}
$$
\end{theorem}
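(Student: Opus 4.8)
The plan is to reduce the statement to the purely topological extension condition of Theorem \ref{k7}, and then to invoke Kuiper's theorem. Taking $S = S^1$, $\tilde S = D^2$, $T = S^1$, $B = L(H)$, and $f$ the identity loop, I first observe that $f^\star$ is the identity on $C(S^1)$, so that $(f^\star \tensor \mathrm{id})\circ \pi(a) = (f^\star \tensor \mathrm{id})(a \tensor 1) = a \tensor 1_H$. Thus condition (2) of Theorem \ref{k7} reads exactly $(j^\star \tensor \mathrm{id})(\phi(a)) = a \tensor 1_H$, which is the conclusion we seek. It therefore suffices to verify the equivalent condition (1): that the map $\imath \circ f \: S^1 \To \Hom(C(S^1), L(H))$ extends to a continuous function $\tilde f \: D^2 \To \Hom(C(S^1), L(H))$.

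Next I would identify this loop explicitly and transport it to the unitary group. Since $f$ is the identity, $(\imath \circ f)(s) = \imath(s)$, where $\imath(s)(a) = a(s)\cdot 1_H$. By Lemma \ref{l5}, evaluation at the unitary generator $z\: S^1 \hookrightarrow \CC$ is a homeomorphism $\eval_z \: \Hom(C(S^1), L(H)) \To U(L(H))$ onto the group of unitary operators in $L(H)$, equipped with the norm topology. Under this homeomorphism the loop $\imath \circ f$ becomes $s \mapsto \imath(s)(z) = z(s)\cdot 1_H = s \cdot 1_H$, that is, the loop of scalar unitaries that traverses the circle once. Extending $\imath \circ f$ over $D^2$ is thus equivalent, via $\eval_z$, to extending the scalar loop $s \mapsto s \cdot 1_H$ to a continuous map on the disk.

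The essential input is Kuiper's theorem \cite{Kuiper}: because $H$ is infinite-dimensional, the unitary group $U(L(H))$ is contractible in the norm topology. A loop extends over the disk if and only if it is nullhomotopic, and in a contractible space every loop is nullhomotopic; hence the scalar loop extends to a continuous map $D^2 \To U(L(H))$. Composing this extension with the homeomorphism $\eval_z^{-1}$ produced by Lemma \ref{l5} yields the required continuous extension $\tilde f$ of $\imath \circ f$, which establishes condition (1) of Theorem \ref{k7} and hence the existence of the desired $\phi$.

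I expect the only genuine difficulty to be concentrated in the appeal to Kuiper's theorem, and the infinite dimensionality of $H$ to be essential there: for finite-dimensional $H$ the analogous scalar loop acquires a nonzero winding number after composing evaluation at $z$ with the determinant, exactly as in Proposition \ref{l4}, and so fails to extend. The remaining steps --- matching the diagram of Theorem \ref{k7} and carrying the loop through the homeomorphism of Lemma \ref{l5} --- are routine identifications.
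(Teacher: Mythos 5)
Your proposal is correct and follows essentially the same route as the paper: reduce to Theorem \ref{k7} with $S=S^1$, $\tilde S=D^2$, $T=S^1$, $B=L(H)$, $f=\mathrm{id}$, transport the loop $\imath$ through the homeomorphism $\eval_z$ of Lemma \ref{l5} into the norm-topology unitary group, and conclude by contractibility via Kuiper's theorem. Your explicit identification of the transported loop as $s \mapsto s\cdot 1_H$ and the remark contrasting with the finite-dimensional case are harmless elaborations the paper omits.
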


The proof of this theorem is a more elaborate variation on the proof of proposition \ref{l4}. We write $U(H)$ for the set of unitary operators in $L(H)$, equipped with the norm topology.

\begin{proof}
We apply theorem \ref{k7}, with $S = S^1$, $T = S^1$, $\tilde S = D^2$, $B = L(H)$, and $f = \mathrm{id}_{S^1}$. Specifically, we show that the function $\imath\: S^1 \To \Hom(C(S^1), L(H))$, defined by $\imath(s)(a) = a(s) \cdot 1_H$ for all $a \in C(S^1)$ and $s \in S^1$, extends to $D^2$.

Composing $\imath$ with the homeomorphism $\eval_z$ of lemma \ref{l5}, we obtain the loop $\eval_z \circ \imath$ in $U(H)$. By Kuiper's theorem \cite{Kuiper}*{theorem (3)}, $U(H)$ is contractible, so the loop $\eval_z \circ \imath$ is nullhomotopic. Since $\eval_z$ is a homeomorphism by lemma \ref{l5}, the loop $\imath$ must also be nullhomotopic. Applying, theorem \ref{k7}, we reach the desired conclusion.
\end{proof}

\section{every loop in $\RP ^2$ is quantum nullhomotopic}

We define the circle $S^1$ to be a subspace of $\CC$, and the sphere $S^2$ to be a subspace of $\CC \times \RR$. Explicitly, the circle $S^1$ consists of complex numbers $\alpha$ such that $|\alpha|^2 = 1$, and the sphere $S^2$ consists of pairs $(\alpha, t)$ in $\CC \times \RR$ such that $|\alpha|^2 +t^2 =1$. The sphere $S^2$ is homeomorphic to $\CP^1$; we explicitly describe one such homeomorphism that is convenient in this context.

Write $R_1(M_2(\CC))$ for the space of reflection matrices of negative determinant, i. e., for the space of $2 \times 2$ complex matrices $b$ such that $b^* = b$, $b^2 = 1$, and $\mathrm{det}(b) = -1$. There is a nice homeomorphism $h\: S^2 \To R_1(M_2(\CC))$ given by the formula
$$ h(\alpha, t) = \left( \begin{matrix} t & \overline \alpha \\ \alpha & -t \end{matrix} \right).$$ 
It satisfies $h(-x) = - h(x)$ for all points $x \in S^2$.

The elements of $R_1(M_2(\CC))$ are exactly the matrices with spectrum equal to $\{1, -1\}$. Thus, for any matrix $b$ in $R_1(M_2(\CC))$, the matrices $ (1+b)/2$ and $(1-b)/2$ are rank-one projections, orthogonal to each other. We obtain a pair of homeomorphisms $x \mapsto (1 + h(x))/2$, and $x \mapsto (1-h(x))/2$, from the space $S^2$ to the space $P_1(M_2(\CC))$ of rank-one projections, with the property that their values at every point $x$ of the sphere $S^2$ are orthogonal.

\begin{lemma}\label{m1}
Let $T$ be a compact Hausdorff space. The space $\Hom(C(T), M_2(\CC))$ is a quotient of $S^2 \times T \times T$:
$$ 
\begin{tikzcd}
S^2\times T \times T
\arrow[two heads]{r}{q_1}
&
(S^2 \times T \times T) / \ZZ_2
\arrow[two heads]{r}{q_2}
&
\Hom(C(T), M_2(\CC))
\end{tikzcd}
$$
All three spaces are compact Hausdorff spaces. The space $(S^2 \times T \times T) / \ZZ_2$ is the quotient of $S^2 \times T \times T$ by the involution $(x, t_1, t_2) \mapsto (-x, t_2, t_1)$, and $q_1$ is the corresponding quotient map. The quotient map $q_2$ takes each orbit $\{(x, t_1, t_2), (-x, t_2, t_1)\}$ to the homomorphism 
\begin{align}\tag{$\ast$}
\mathrm{eval}_{t_1}(\,\cdot\,) \frac{I_2+ h(x)}2  + \mathrm{eval}_{t_2}(\, \cdot\,) \frac{I_2 - h(x)}2.
\end{align}
The quotient map $q_2$ identifies distinct orbits $\{(x, t_1, t_2), (-x, t_2, t_1)\}$ and $\{(x', t_1', t_2'), (-x', t_2', t_1')\}$ if and only if $t_1 = t_2 = t_1' = t_2'$.
\end{lemma}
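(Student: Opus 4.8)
The plan is to recognize the composite $q_2\circ q_1$ as a single continuous surjection $\Phi\colon(x,t_1,t_2)\mapsto(\ast)$, to promote $\Phi$ to a quotient map by a compactness argument, and then to read off its point preimages by a direct spectral computation. First I would record the normal form of a unital $*$-homomorphism $\rho\colon C(T)\To M_2(\CC)$. Since $C(T)$ is commutative, the image $\rho(C(T))$ is a commutative unital $*$-subalgebra of $M_2(\CC)$, hence lies in $\CC P_1+\CC P_2$ for some pair of orthogonal rank-one projections with $P_1+P_2=I_2$ (either $\rho(C(T))$ is the scalars, in which case any such pair serves, or it is a maximal abelian subalgebra). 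Writing $\rho(a)=\lambda_1(a)P_1+\lambda_2(a)P_2$, the coefficient functionals $\lambda_i$ are characters of $C(T)$ and so equal $\eval_{t_i}$ at points $t_1,t_2\in T$. The self-adjoint unitary $b=2P_1-I_2$ has spectrum $\{1,-1\}$ and determinant $-1$, so $b\in R_1(M_2(\CC))$, and since $h$ is a homeomorphism there is a unique $x\in S^2$ with $h(x)=b$; then $\rho$ is exactly $(\ast)$, so $\Phi$ is surjective. Because $h(-x)=-h(x)$, the involution $(x,t_1,t_2)\mapsto(-x,t_2,t_1)$ leaves $(\ast)$ unchanged, so $\Phi$ descends through $q_1$ to a well-defined $q_2$ with $q_2\circ q_1=\Phi$.

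Next I would verify that the three spaces are compact Hausdorff. The product $S^2\times T\times T$ is compact Hausdorff; its involution is fixed-point-free, since no $x\in S^2$ satisfies $x=-x$, and the orbit space of a compact Hausdorff space under a finite group of homeomorphisms is again compact Hausdorff, which handles $(S^2\times T\times T)/\ZZ_2$; finally $\Hom(C(T),M_2(\CC))$ is compact by Lemma \ref{k9} and Hausdorff because the point-norm topology embeds it into a product of copies of $M_2(\CC)$. For continuity of $\Phi$, joint continuity of evaluation $T\times C(T)\To\CC$ together with continuity of $h$ makes the uncurried map $(x,t_1,t_2,a)\mapsto a(t_1)(I_2+h(x))/2+a(t_2)(I_2-h(x))/2$ continuous; by Cartesian closedness, and since the point-norm and compact-open topologies agree by Lemma \ref{k3}, $\Phi$ is continuous. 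A continuous surjection from a compact space onto a Hausdorff space is closed, hence a quotient map; thus $\Phi=q_2\circ q_1$ and $q_2$ are quotient maps, while $q_1$ is one by definition.

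Finally I would compute the fibers. For $\rho$ of the form $(\ast)$ one has $\spec(\rho(a))=\{a(t_1),a(t_2)\}$. If $t_1=t_2=t$, then $\rho=\eval_t\cdot I_2$ is scalar and independent of $x$; conversely a scalar $\rho$ forces $t_1=t_2$ (choose $a$ separating $t_1$ from $t_2$), and two scalar homomorphisms coincide exactly when their points do, since $C(T)$ separates points. This collapses the entire $S^2$-fiber over the diagonal of $T\times T$ to a single point and yields the stated sufficiency, namely that distinct orbits are identified whenever $t_1=t_2=t_1'=t_2'$. If instead $t_1\neq t_2$, then $\rho$ has two-dimensional image, so the unordered set $\{t_1,t_2\}$ is intrinsic to $\rho$, being the support of the two characters of $\rho(C(T))\iso\CC^2$; choosing $a$ with $a(t_1)=1$ and $a(t_2)=0$ recovers $P_1=\rho(a)$, hence $b$, hence $x$, so $\rho$ determines $(x,t_1,t_2)$ up to the involution. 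Thus $q_2$ is injective over the off-diagonal, giving the necessity. I expect the delicate point to be exactly this off-diagonal fiber analysis — cleanly recovering the projections (and thus $x$) from $\rho$ and matching the two orderings of $\{t_1,t_2\}$ to the involution — rather than the routine verification that the three spaces are compact Hausdorff.
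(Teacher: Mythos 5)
Your proposal is correct and follows essentially the same route as the paper: continuity and $\ZZ_2$-invariance of $(\ast)$, surjectivity via the normal form $\rho(a)=\eval_{t_1}(a)P_1+\eval_{t_2}(a)P_2$ obtained from Gelfand duality and the homeomorphism $h$, the compact-to-Hausdorff argument to upgrade to a quotient map, and a fiber analysis split on whether $\rho(C(T))$ is $\CC$ or $\CC^2$. The only cosmetic difference is that you recover $P_1$ (hence $x$) by evaluating $\rho$ at a Urysohn-type function separating $t_1$ from $t_2$, where the paper argues via the minimal projections of the image algebra; these are the same idea.
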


\begin{proof}
The first space $S^2\times T \times T$ is a compact Hausdorff space because it is a product of compact Hausdorff spaces. The second space $(S^2 \times T \times T) / \ZZ_2$ is a compact Hausdorff space because it is the quotient of a compact Hausdorff space by the action of a finite group. The third space $\Hom(C(T), M_2(\CC))$ is a compact Hausdorff space by lemma \ref{k9}.

The expression ($\ast$) clearly names a morphism in Steenrod's convenient category, i. e., a continuous function from $S^2 \times T \times T$ to $C(C(T, \CC), M_2(\CC))$. It is immediately apparent from the expression that this continuous function is invariant under the action of $\ZZ_2$. Therefore, it factors through the quotient map $q_1$, via a continuous function $q_2$.

For all points $x$ in $S^2$, the matrices $(1 + h(x))/2$ and $(1-h(x))/2$ are rank-one projections that sum to the identity matrix. Furthermore, for all points $t_1$ and $t_2$ in $T$ the evaluation functions $\eval_{t_1}$ and $\eval_{t_2}$ are unital $*$-homomorphisms. It follows that for all points $(x, t_1, t_2)$ in the product space $S^2 \times T \times T$, the expression ($\ast$) names a unital $*$-homomorphism. Therefore, the range of $q_2$ is a subset of $\Hom(C(T), M_2(\CC))$.

We collect a couple of basic facts. First, the range of any element $\rho$ in $\Hom(C(T), M_2(\CC))$ is a commutative unital C*-subalgebra of $M_2(\CC)$. In particular, it is isomorphic to either $\CC$ or $\CC^2$. Second, if $\rho= (q_2 \circ q_1)(x, t_1, t_2)$ for some $(x, t_1, t_2) \in S^2 \times T \times T$, then the dimension of $\rho(C(T))$ depends on whether the points $t_1$ and $t_2$ are equal or distinct. If they are equal, then $\rho = \eval_{t_1}( \cdot) I_2$, so $\rho(C(T)) \iso \CC$. If $t_1$ and $t_2$ are distinct, then the distinct characters $\eval_{t_1}$ and $\eval_{t_2}$ both factor through $\rho\: C(T) \To \rho(C(T))$, so $\rho(C(T))$ cannot be one-dimensional, and thus, $\rho(C(T)) \iso \CC^2$.

Let $\rho$ be any element of $\Hom(C(T), M_2(\CC))$. First, assume that $\rho(C(T)) \iso \CC$. It follows that $\rho$ is the composition of a unital $*$-homomorphism $\gamma\: C(T) \To \CC$ with the canonical inclusion $ \CC \hookrightarrow M_2(\CC)$. By Gelfand duality, any such homomorphism $\gamma$ is equal to $\eval_t$ for some point $t$. Thus, $\rho = \eval_t(\, \cdot \,) I_2 = q_2 (q_1((1,0), t, t))$.

Now assume that $\rho(C(T)) \iso \CC^2$. It follows that we can write $\rho$ as $\gamma_1(\,\cdot\,) b_1 + \gamma_2 (\, \cdot\,) b_2$, for pairwise orthogonal projections $b_1$ and $b_2$ in $\rho(C(T))$, and unital $*$-homomorphisms $\gamma_1$ and $\gamma_2$ from $C(T)$ to $\CC$. By Gelfand duality, there exist points $t_1$ and $t_2$ such that $\gamma_1 = \eval_{t_1}$ and $\gamma_2 = \eval_{t_2}$. There also exists a point $x$ in the sphere $S^2$ such that $b_1 = (1+h(x))/2$, and consequently, such that $b_2 = (1-h(x))/2$. Thus, $\rho = \eval_{t_1}(\, \cdot\,)((1+h(x))/2) + \eval_{t_2}(\, \cdot\,)((1-h(x))/2) = q_2(q_1(x, t_1, t_2))$. Therefore, $q_2$ is surjective. As any continuous surjective function between compact Hausdorff spaces it must be a quotient map.

Let $w = \{(x, t_1, t_2), (-x, t_2, t_1)\}$ and $w' =\{(x', t_1', t_2'), (-x', t_2', t_1')\}$ be orbits in $(S^2 \times T \times T) / \ZZ_2$. If $t_1 = t_2 = t_1' = t_2'$, then $q(w) = \eval_{t_1}(\,\cdot\,) I_2 = \eval_{t_1'}(\, \cdot\,)I_2 = q_2(w')$. Conversely, assume that $q_2(w) = q_2(w')$, and write $\rho = q_2(w) = q_2(w')$. The range $\rho(C(T))$ is isomorphic to either $\CC$ or $\CC^2$. If it is isomorphic to $\CC$, then $t_1 = t_2$, $t_1' = t_2'$, and $\eval_{t_1}(\, \cdot \,)I_2 = q_2(w) = q_2(w') = \eval_{t_1'}(\, \cdot \, ) I_2$, so $t_1 =t_1'$. Thus, $t_1 = t_2 = t_1' = t_2'$.

Therefore, assume that the range $\rho(C(T))$ is isomorphic to $\CC^2$. The two minimal projections in $q_2(w)(C(T))$ are $(1+h(x))/2$ or $(1-h(x))/2$, and similarly the two minimal projections in $q_2(w')(C(T))$ are $(1+h(x'))/2$ or $(1-h(x'))/2$. Since the C*-algebras $q_2(w)(C(T))$ and $q_2(w')(C(T))$ are equal, the projection $(1+h(x))/2$ must be equal to either $(1+h(x'))/2$ or to $(1-h(x'))/2$. In other words, $h(x)$ must be equal to either $h(x')$ or $h(-x')$, so $w = w'$. Therefore, 
$q_2(w) = q_2(w')$ if and only if $w = w'$ or $t_1 = t_2 = t_1' = t_2'$.
\end{proof}

If $\psi$ is a path in $S^2$, and $\phi_1$ and $\phi_2$ are paths in $T$, we write $[\psi, \phi_1, \phi_2]$ for the homotopy class of the path $(\psi, \phi_1, \phi_2)$ defined by $\tau \mapsto (\psi(\tau), \phi_1(\tau), \phi_2(\tau))$.

\begin{theorem}\label{m2}
Let $T$ be a compact Hausdorff space with distinguished point $t_0$. Write $q$ for the composition of the quotient maps $q_1$ and $q_2$ in lemma \ref{m1}, and write 
$$q_*\: \pi_1(S^2 \times T \times T, ((1,0), t_0, t_0)) \To \pi_1(\Hom(C(T), M_2(\CC)), \eval_{t_0})$$
for the induced homomorphism of fundamental groups. For every path $\varphi\: [0,1] \To T$, beginning and ending at $t_0$, we have that $q_*[\cst_{(1,0)}, \varphi, \cst_{t_0}] = q_*[\cst_{(1,0)}, \cst_{t_0}, \varphi]$.
\end{theorem}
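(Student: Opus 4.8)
The plan is to exhibit an explicit based homotopy in $\Hom(C(T), M_2(\CC))$ between the two pushed-forward loops. Writing out the formula $(\ast)$ of lemma \ref{m1}, the left-hand class is represented by $L(\tau) = q((1,0), \varphi(\tau), t_0)$ and the right-hand class by $R(\tau) = q((1,0), t_0, \varphi(\tau))$. Setting $P_{\pm} = (I_2 \pm h(1,0))/2$, these are
$$L(\tau) = \eval_{\varphi(\tau)}(\,\cdot\,)\,P_+ + \eval_{t_0}(\,\cdot\,)\,P_-, \qquad R(\tau) = \eval_{t_0}(\,\cdot\,)\,P_+ + \eval_{\varphi(\tau)}(\,\cdot\,)\,P_-.$$
Thus $L$ and $R$ are the same recipe with the moving evaluation point $\eval_{\varphi(\tau)}$ attached to the two complementary rank-one projections in opposite order. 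I would connect them by sliding the sphere coordinate from $(1,0)$ to its antipode.

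There are two key observations. First, by the identity $h(-x) = -h(x)$ recorded just after the definition of $h$, moving $x$ from $(1,0)$ to $(-1,0)$ interchanges the projections $(I_2 + h(x))/2$ and $(I_2 - h(x))/2$; in particular $q((-1,0), \varphi(\tau), t_0) = R(\tau)$. Second, and decisively, the formula $(\ast)$ collapses whenever the two $T$-coordinates coincide: since $(I_2 + h(x))/2 + (I_2 - h(x))/2 = I_2$, we have $q(x, t_0, t_0) = \eval_{t_0}(\,\cdot\,)\,I_2 = \eval_{t_0}$ for \emph{every} $x \in S^2$, independently of $x$. So the sphere coordinate is invisible over the basepoint.

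I would then fix any path $\gamma\: [0,1] \To S^2$ with $\gamma(0) = (1,0)$ and $\gamma(1) = (-1,0)$, for instance $\gamma(s) = (e^{i\pi s}, 0)$, and define $H(\tau, s) = q(\gamma(s), \varphi(\tau), t_0)$. This is continuous because $q$, $\gamma$, and $\varphi$ are, and the two observations give $H(\tau, 0) = L(\tau)$, $H(\tau, 1) = R(\tau)$, and $H(0, s) = H(1, s) = q(\gamma(s), t_0, t_0) = \eval_{t_0}$ for all $s$. Hence $H$ is a based homotopy from $L$ to $R$, yielding $q_*[\cst_{(1,0)}, \varphi, \cst_{t_0}] = q_*[\cst_{(1,0)}, \cst_{t_0}, \varphi]$.

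The one point deserving emphasis — and the only real subtlety — is that this identification cannot be seen upstairs in $S^2 \times T \times T$: there the loops $[\cst_{(1,0)}, \varphi, \cst_{t_0}]$ and $[\cst_{(1,0)}, \cst_{t_0}, \varphi]$ represent the genuinely distinct classes $([\varphi], 0)$ and $(0, [\varphi])$ in $\pi_1(T) \times \pi_1(T)$. The identification is created by $q$ alone, precisely because $q$ forgets the sphere coordinate over the diagonal $t_1 = t_2$, so the antipodal slide that swaps the two projections costs nothing at the basepoint.
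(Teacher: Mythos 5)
Your proof is correct and is essentially the paper's own argument: both hinge on the half-meridian $\gamma(s)=(e^{i\pi s},0)$ together with the two facts that $q(-x,t_2,t_1)=q(x,t_1,t_2)$ and that $q(x,t_0,t_0)=\eval_{t_0}$ independently of $x$. The only difference is presentational — the paper encodes the same homotopy as an identity of path classes by inserting the trivial class $q_*[\sigma,\cst_{t_0},\cst_{t_0}]$ and regrouping, whereas you write the based two-parameter homotopy $H(\tau,s)=q(\gamma(s),\varphi(\tau),t_0)$ explicitly.
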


\begin{proof}
Let $\sigma$ be the path in $S^2$, beginning at $(1,0)$ and ending at $(-1,0)$, defined by $\sigma(\tau) = (\exp(\pi i \tau), 0)$. The path $q \circ (\sigma, \cst_{t_0}, \cst_{t_0})$ is the constant path at $\eval_{t_0}$, by lemma \ref{m1}. It follows that $q_*[\sigma, \cst_{t_0}, \cst_{t_0}]$ is the identity element of $\pi_1(\Hom(C(T), M_2(\CC)), \eval_{t_0})$.

Let $\varphi\:[0,1] \To T$ be any path in $T$ beginning and ending at $t_0$. Appealing to lemma \ref{m1} in the first step, we calculate that
\begin{align*}
q_*[\cst_{(1,0)}, \varphi, \cst_{t_0}]
&= q_*[\cst_{(-1,0)}, \cst_{t_0}, \varphi]
\\ &= q_*[\cst_{(-1,0)}, \cst_{t_0}, \varphi] \cdot q_*[\sigma, \cst_{t_0}, \cst_{t_0}]
\\ &= q_*( [ \cst_{(-1,0)}, \cst_{t_0}, \varphi] \cdot [\sigma, \cst_{t_0}, \cst_{t_0}])
\\ &= q_*[\sigma, \cst_{t_0}, \varphi]
\\ &= q_*([\sigma, \cst_{t_0}, \cst_{t_0}] \cdot [ \cst_{(1,0)}, \cst_{t_0}, \varphi])
\\ &= q_*[\sigma, \cst_{t_0}, \cst_{t_0}] \cdot q_*[ \cst_{(1,0)}, \cst_{t_0}, \varphi]
\\ &= q_*[ \cst_{(1,0)}, \cst_{t_0}, \varphi]
\end{align*}
\end{proof}

The embedding $\imath \: T \rightarrowtail \Hom(C(T), M_2(\CC))$ clearly factors through $q$:
$$
\begin{tikzcd}
&
S^2 \times T \times T
\arrow{d}{q}
\\
T
\arrow{ru}{\cst_{(1,0)} \times \Delta}
\arrow{r}[swap]{\imath}
&
\Hom(C(T), M_2(\CC))
\end{tikzcd}
$$
The map diagonal map $\Delta\: T \To T \times T$ is defined by $\Delta(t) = (t, t)$.

\begin{corollary}
Every loop in $\RP^2$ is quantum nullhomotopic.
\end{corollary}

\begin{proof}
Let $t_0$ be any point of $T = \RP^2$, and let $\varphi\: [0,1] \To T$ be any path that begins and ends at $t_0$.
\begin{align*}
 \imath_*[\varphi] & = q_*( (\cst_{(1,0)} \times \Delta)_* [\varphi]) = q_*[ \cst_{(1,0)}, \varphi, \varphi]
 \\ &= q_*([\cst_{(1,0)}, \varphi, \cst_{t_0}] \cdot [\cst_{(1,0)}, \cst_{t_0},\varphi ])
  \\ &= q_*[\cst_{(1,0)}, \varphi, \cst_{t_0}] \cdot q_* [\cst_{(1,0)}, \cst_{t_0},\varphi]
  \\ & = q_*[\cst_{(1,0)}, \varphi, \cst_{t_0}] \cdot q_* [\cst_{(1,0)},\varphi, \cst_{t_0}]
  \\ & = q_*([\cst_{(1,0)}, \varphi, \cst_{t_0}] \cdot [\cst_{(1,0)},\varphi, \cst_{t_0}])
\end{align*}
\end{proof}

Thus, $\imath_*[\phi]$ is the identity element of $\pi_1(\Hom(C(T), M_2(\CC)), \eval_{t_0})$. Therefore, for every loop $f\: S^1 \To T$, the loop $\iota \circ f$ is nullhomotopic. By corollary \ref{l2}, we conclude that every loop in $T = \RP^2$ is quantum nullhomotopic.

\begin{proposition}\label{m4}
Let $T$ be a compact Hausdorff space with distinguished point $t_0$. The group $\imath_*(\pi_1(T, t_0))$ is commutative.
\end{proposition}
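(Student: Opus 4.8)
The plan is to leverage Theorem \ref{m2} and the factorization $\imath = q \circ (\cst_{(1,0)} \times \Delta)$ recorded just above. I would first set $G = \pi_1(\Hom(C(T), M_2(\CC)), \eval_{t_0})$ and introduce two homomorphisms $\beta, \gamma \colon \pi_1(T, t_0) \To G$ defined by $\beta[\varphi] = q_*[\cst_{(1,0)}, \varphi, \cst_{t_0}]$ and $\gamma[\varphi] = q_*[\cst_{(1,0)}, \cst_{t_0}, \varphi]$. Each is a genuine group homomorphism, being $q_*$ precomposed with the map induced on $\pi_1$ by the based inclusion of $T$ into $S^2 \times T \times T$ as the first, respectively second, copy of $T$. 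The factorization of $\imath$ yields $\imath_*[\varphi] = q_*[\cst_{(1,0)}, \varphi, \varphi]$, and since the diagonal loop splits in the product as $[\cst_{(1,0)}, \varphi, \varphi] = [\cst_{(1,0)}, \varphi, \cst_{t_0}] \cdot [\cst_{(1,0)}, \cst_{t_0}, \varphi]$, this gives $\imath_*[\varphi] = \beta[\varphi]\,\gamma[\varphi]$, exactly as in the preceding corollary.

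The crux is a commutation property. Because $S^2$ is simply connected, loops in the two distinct $T$-factors of $S^2 \times T \times T$ commute in $\pi_1(S^2 \times T \times T)$; that is, $[\cst_{(1,0)}, \varphi, \cst_{t_0}]$ and $[\cst_{(1,0)}, \cst_{t_0}, \psi]$ commute for all loops $\varphi, \psi$. Applying the homomorphism $q_*$, the images of $\beta$ and $\gamma$ commute elementwise: $\beta[\varphi]\,\gamma[\psi] = \gamma[\psi]\,\beta[\varphi]$. Theorem \ref{m2}, on the other hand, states precisely that $\beta = \gamma$. Substituting $\gamma = \beta$ into the elementwise commutation shows that $\beta[\varphi]$ and $\beta[\psi]$ commute for all $\varphi, \psi$, so $\mathrm{im}(\beta)$ is an abelian subgroup of $G$.

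Finally, I would close the loop by noting that $\mathrm{im}(\imath_*) \subseteq \mathrm{im}(\beta)$. Using $\beta = \gamma$ in the formula of the first step, $\imath_*[\varphi] = \beta[\varphi]\,\gamma[\varphi] = \beta[\varphi]^2 = \beta([\varphi]^2) \in \mathrm{im}(\beta)$; since $\imath_*$ is a homomorphism, its entire image sits inside $\mathrm{im}(\beta)$. A subgroup of an abelian group is abelian, so $\imath_*(\pi_1(T, t_0))$ is commutative.

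The argument is short once these pieces are assembled, and I expect the only delicate point to be the commutation step: justifying that loops in distinct $T$-coordinates of $S^2 \times T \times T$ commute and that this descends through $q_*$, and then recognizing that it is the combination of this commutation with the identity $\beta = \gamma$ from Theorem \ref{m2}—not merely the homomorphism property of $\beta$—that forces $\mathrm{im}(\beta)$, and hence $\imath_*(\pi_1(T, t_0))$, to be abelian.
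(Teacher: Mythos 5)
Your proposal is correct and follows essentially the same route as the paper: both rest on the product decomposition $[\cst_{(1,0)}, \varphi, \varphi] = [\cst_{(1,0)}, \varphi, \cst_{t_0}]\cdot[\cst_{(1,0)}, \cst_{t_0}, \varphi]$, the fact that loops supported in distinct factors of $S^2 \times T \times T$ commute, and the identity $q_*[\cst_{(1,0)}, \varphi, \cst_{t_0}] = q_*[\cst_{(1,0)}, \cst_{t_0}, \varphi]$ from Theorem \ref{m2}. Your packaging via the homomorphisms $\beta = \gamma$ and the inclusion $\mathrm{im}(\imath_*) \subseteq \mathrm{im}(\beta)$ is a clean reorganization of the paper's direct computation (and the appeal to simple connectivity of $S^2$ is harmless but unnecessary, since loops in distinct factors of any product commute).
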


\begin{proof}
Let $\imath_*[\varphi_1]$ and $\imath_*[\varphi_2]$ be elements of $\imath_*(\pi_1(T, t_0))$.  Appealing to theorem \ref{m2}, we calculate that
\begin{align*}
  q_*[ \cst_{(1,0)},  \varphi_1, \cst_{t_0} ] \cdot q_*[ \cst_{(1,0)},  \varphi_2, \cst_{t_0}] 
& =
  q_*[ \cst_{(1,0)},  \varphi_1, \cst_{t_0}] \cdot q_*[ \cst_{(1,0)},  \cst_{t_0}, \varphi_2]
\\ &=
q_*[\cst_{(1,0)},  \varphi_1, \varphi_2 ]
\\ &= q_*[ \cst_{(1,0)},  \cst_{t_0}, \varphi_2] \cdot  q_*[ \cst_{(1,0)},  \varphi_1, \cst_{t_0} ]
\\ & = q_*[ \cst_{(1,0)},  \varphi_2, \cst_{t_0}]  \cdot q_*[ \cst_{(1,0)},  \varphi_1, \cst_{t_0} ]
\end{align*}
Similarly $q_*[ \cst_{(1,0)},  \cst_{t_0},\varphi_1  ]$ and $q_*[ \cst_{(1,0)},   \cst_{t_0},\varphi_2]$ commute. Since we can write $$\imath_*[\varphi_1] = q_*[ \cst_{(1,0)}, \cst_{t_0}, \varphi_1] \cdot q_*[ \cst_{(1,0)},  \varphi_1, \cst_{t_0} ],$$
and likewise for $\imath_*[\varphi_2]$,
it follows that $\imath_*[\varphi_1]$ and $\imath_*[\varphi_2]$ commute. Therefore, the group $\imath_*(\pi_1(T, t_0))$ is commutative.
\end{proof}

\begin{corollary}
There is a loop in $(S^1, 1) \vee (S^1, 1)$ that is quantum nullhomotopic, but not nullhomotopic in the ordinary sense.
\end{corollary}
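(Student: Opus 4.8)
The plan is to exhibit the commutator loop in the figure-eight space and to play the nonabelianness of its fundamental group against Proposition \ref{m4}. Recall that the fundamental group $\pi_1((S^1,1) \vee (S^1,1), 1)$ is the free group on two generators $a$ and $b$, one for each circle of the wedge. Set $T = (S^1, 1) \vee (S^1, 1)$, a compact Hausdorff space, and let $f\: S^1 \To T$ be a based loop whose homotopy class is the commutator $[a,b] = a\, b\, a\inv\, b\inv$.

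First I would check that $f$ is not nullhomotopic in the ordinary sense. A based loop in $T$ extends to the disk $D^2$ if and only if it is freely nullhomotopic, which for a based loop is equivalent to its class being the identity of $\pi_1(T,1)$. Since the free group on $a$ and $b$ has rank two, the commutator $[a,b]$ is a nontrivial reduced word, so $[f] \neq 1$ and $f$ does not extend to $D^2$.

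Next I would show that $f$ is quantum nullhomotopic by invoking Corollary \ref{l2} with $n = 2$. By that corollary it suffices to verify that $\imath \circ f \: S^1 \To \Hom(C(T), M_2(\CC))$ is nullhomotopic in the ordinary sense, equivalently that $\imath_*[f]$ is the identity of $\pi_1(\Hom(C(T), M_2(\CC)), \eval_{t_0})$, where $\imath(t)(a) = a(t) \cdot I_2$. Since $\imath_*$ is a group homomorphism, $\imath_*[f] = \imath_*[a,b] = [\,\imath_*[a], \imath_*[b]\,]$. But Proposition \ref{m4} asserts that the subgroup $\imath_*(\pi_1(T, t_0))$ is commutative, so this commutator is trivial; hence $\imath_*[f] = 1$ and $\imath \circ f$ is nullhomotopic. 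Applying Corollary \ref{l2} once more, $f$ is quantum nullhomotopic, so $f$ has the required property.

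\textbf{Main obstacle.} The genuine content is entirely contained in Proposition \ref{m4}, which is already in hand; the remaining steps are the routine facts that the commutator is a nontrivial element of a free group of rank two and that, for based loops, ordinary nullhomotopy coincides with triviality in $\pi_1$. I therefore expect no substantive difficulty, the only point requiring care being to match the index $n = 2$ of Corollary \ref{l2} with the matrix size $M_2(\CC)$ appearing in Proposition \ref{m4}.
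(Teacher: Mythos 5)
Your proposal is correct and follows the paper's own argument essentially verbatim: take the commutator of the two generators of the free group $\pi_1(S^1 \vee S^1)$, note it is nontrivial hence not nullhomotopic, and use Proposition \ref{m4} to conclude that its image under $\imath_*$ is trivial in $\pi_1(\Hom(C(T), M_2(\CC)), \eval_{t_0})$, whence Corollary \ref{l2} with $n=2$ gives quantum nullhomotopy. No gaps; the approach matches the paper's.
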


\begin{proof}
The fundamental group of $T =S^1 \vee S^1$ is the free group on two generators. Therefore, the commutator of its two generators yields a loop $f$ in $S^1 \vee S^1$ that is not nullhomotopic. However, the image of this loop $f$ in $\Hom(C(T), M_2(\CC))$ must be nullhomotopic by proposition \ref{m4}. It follows that $f$ is quantum nullhomotopic by corollary \ref{l2}.
\end{proof}

\begin{bibdiv}
\begin{biblist}

\bib{AbramskyBarbosaDeSilvaZapata}{article}{
author={S. Abramsky},
author={R. S. Barbosa},
author={N. de Silva},
author={O. Zapata},
title={The Quantum Monad on Relational Structures},
year={2017},
journal={Proc. MFCS 2017}
}

\bib{CameronMontanaroNewmanSeveriniWinter}{article}{
author={P. J. Cameron},
author={A. Montanaro},
author={M. W. Newman},
author={S. Severini},
author={A. Winter},
title={On the quantum chromatic number of a graph},
journal={Electron. J. Combin.},
volume={14},
number={1},
year={2007}
}

\bib{Davidson}{article}{
author={K. R. Davidson},
title={C*-algebras by Example},
journal={Fields Institute Monographs},
volume={6},
year={1996},
}

\bib{DeCommerKasprzakSkalskiSoltan16}{article}{
author={K. De Commer},
author={P. Kasprzak},
author={A. Skalski},
author={P. So\l tan},
title={Quantum actions on discrete quantum spaces and a generalization of Clifford's theory of representations},
year={2016},
eprint={arXiv:1611.10341}
}

\bib{EffrosRuan94}{article}{
author={E. Effros},
author={Z.-J. Ruan},
title={Discrete Quantum Groups I. The Haar Measure},
journal={Int. J. Math},
volume={5},
year={1994},
}

\bib{GarciaBondiaVarillyFigueroa}{book}{
author={J. M. Garcia-Bondia},
author={J. C. Varilly},
author={H. Figueroa},
title={Elements of Noncommutative Geometry},
publisher={Birka\" user},
year={2000},
}

\bib{Kornell18}{article}{
author={A. Kornell},
title={Quantum sets},
journal={conditionally accepted for publication in J. Math. Phys.},
}

\bib{Kuiper}{article}{
title={The homotopy type of the unitary group of Hilbert space},
author={N. H. Kuiper},
journal={Topology},
volume={3},
year={1965},
}

\bib{MancinskaRoberson}{article}{
author={L. Man\v{c}inska},
author={D. E. Roberson},
title={Quantum homomorphisms},
journal={J. Combin. Theory, Series B},
volume={118},
year={2016}
}

\bib{Murphy}{book}{
author={G. J. Murphy},
title={C*-algebras and Operator Theory},
publisher={Academic Press},
date={1990},
}

\bib{MustoReutterVerdon}{article}{
author={B. Musto},
author={D. J. Reutter},
author={D. Verdon},
title={A compositional approach to quantum functions},
year={2017},
journal={to appear in J. Math. Phys.},
eprint={arXiv:1711.07945},
}

\bib{Pedersen}{book}{
author={G. K. Pedersen},
title={Analysis NOW},
publisher={Springer Science+Business Media},
year={1995},
}

\bib{PodlesWoronowicz90}{article}{
author={P. Podle\'s},
author={S. L. Woronowicz},
title={Quantum deformation of Lorentz Group},
journal={Comm. Math. Phys.},
volume={130},
number={2},
year={1990},
}

\bib{Steenrod}{article}{
author={N. E. Steenrod},
title={A convenient category of topological spaces},
journal={Michigan Math J.},
volume={14},
number={2},
year={1967},
}

\bib{VanDaele96}{article}{
author={A. Van Daele},
title={Discrete quantum groups},
journal={J. Algebra},
volume={180},
year={1996},
}

 \end{biblist}
\end{bibdiv}

\end{document}